\newtheorem{theorem}{Theorem}[section]
\newtheorem{lemma}[theorem]{Lemma}
\newtheorem{corollary}[theorem]{Corollary}
\newtheorem{conjecture}[theorem]{Conjecture}
\theoremstyle{definition}
\newtheorem{remark}[theorem]{Remark}
\def\N{\mathbb{N}}
\def\C{\mathbb{C}}
\def\R{\mathbb{R}}
\def\Z{\mathbb{Z}}
\DeclareMathOperator{\dist}{dist}
\DeclareMathOperator{\maxdist}{maxdist}
\DeclareMathOperator{\erfc}{erfc}
\renewcommand*{\Re}{\operatorname{Re}}
\renewcommand*{\Im}{\operatorname{Im}}
\begin{document}

\title[Zeros of sections of exponential integrals]{Limit curves for zeros of sections of\\ exponential integrals}
\author{Antonio R. Vargas}
\address{Dept. of Mathematics and Statistics, Dalhousie University, Halifax, Nova Scotia B3H 4J5, Canada}
\email{antoniov@mathstat.dal.ca}

\begin{abstract}
We are interested in studying the asymptotic behavior of the zeros of partial sums of power series for a family of entire functions defined by exponential integrals.  The zeros grow on the order of $O(n)$, and after rescaling we explicitly calculate their limit curve.  We find that the rate that the zeros approach the curve depends on the order of the singularities/zeros of the integrand in the exponential integrals.  As an application of our findings we derive results concerning the zeros of partial sums of power series for Bessel functions of the first kind.

\medskip
\noindent \textsc{Keywords.} zeros of polynomials $\cdot$ sections of power series $\cdot$ Szeg\H{o} curves $\cdot$ asymptotic analysis $\cdot$ entire functions

\medskip
\noindent \textsc{Mathematics subject classification.} 30B10 $\cdot$ 30C15 $\cdot$ 30D10 $\cdot$ 33C10 $\cdot$ 33C15
\end{abstract}

\maketitle

\ifpdf
    \graphicspath{{sec_intro/PNG/}{sec_intro/PDF/}}
\else
    \graphicspath{sec_intro/EPS/}
\fi

\section{Introduction}
\label{sec_intro}

In this paper we are concerned with the asymptotic behavior of the zeros of the polynomial sequence given by the partial sums of a convergent power series.  If $f$ is a function which is analytic at the origin, then it can be represented by a power series
\[
	f(z) = \sum_{k=0}^{\infty} a_k z^k
\]
which converges near $z=0$.  We denote the $n^{\text{th}}$ partial sum of this power series by
\begin{equation}
\label{sndef}
	s_n[f](z) = \sum_{k=0}^{n} a_k z^k,
\end{equation}
and we refer to $s_n[f](z)$ as the $n^{\text{th}}$ section of $f$.

If the power series for $f$ has a finite radius of convergence, then it is a classical result of Jentzsch \cite{jentzsch} that every point on the circle of convergence of the power series will be a limit point of the zeros of the sections $s_n[f](z)$.  In addition, because power series converge uniformly on compact subsets of their domains of convergence, Hurwitz's theorem (see, e.g., \cite[p.~4]{marden:geom}) tells us that any zero of $f$ inside the radius of convergence will also be a limit point of the zeros of the sections.

The behavior of the zeros becomes much more interesting when $f$ is entire.  The topic can be traced back to Szeg\H{o}, who studied in \cite{szego:exp} the sections of the exponential function $e^z$, given by
\[
	s_n[\exp](z) = \sum_{k=0}^{n} \frac{z^k}{k!}.
\]
Szeg\H{o} found that the zeros of the normalized sections $s_n[\exp](nz)$ have as their set of limit points the simple closed loop
\[
	D = \left\{z \in \C : |z| \leq 1 \,\,\,\text{and}\,\,\, \left|z e^{1-z}\right| = 1\right\}.
\]
This is often referred to as the Szeg\H{o} curve.  The rate that the zeros approach this curve was first studied by Buckholtz \cite{buckholtz:expcharacter}, who showed that every zero of $s_n[\exp](nz)$ lies within a distance of $2e/\sqrt{n}$ of $D$.  Carpenter, Varga, and Waldvogel \cite{cvw:expasympi} examined this phenomenon in detail and showed that Buckholtz's result gives the best-possible asymptotic order.  The statement of the theorem involves the complementary error function
\[
	\erfc{z} = \frac{2}{\sqrt{\pi}} \int_{z}^{\infty} e^{-t^2}\,dt,
\]
where the path of integration begins at $z$ and travels to the right to $\infty$.

\begin{theorem}[CVW]
\label{cvwtheorem1}
For $\Omega \subseteq \C$, define $\maxdist\!\left(\Omega,\, C\right) = \sup_{z \in \Omega} \left\{\dist(z,C)\right\}$.  If $\{z_{k,n}\}_{k=1}^{n}$ are the zeros of $s_n[\exp](nz)$ and if $t_1$ is the zero of the complementary error function $\erfc$ closest to the origin in the upper half-plane, then
\[
	\liminf_{n \to \infty} \sqrt{n} \cdot \maxdist\!\left(\{z_{k,n}\}_{k=1}^{n},D\right) \geq \Re(t_1) + \Im(t_1) \approx 0.636657.
\]
\end{theorem}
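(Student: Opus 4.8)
The plan is to carry out the whole analysis in a fixed small neighborhood of the point $z=1$, which is where $D$ has a corner and where, heuristically, the zeros of the sections lag farthest behind $D$. I would start from the classical identity $s_n[\exp](x) = e^{x}\Gamma(n+1,x)/n!$ for the incomplete gamma function; after the substitution $t=ns$ this reads
\[
	e^{-nz}\,s_n[\exp](nz) \;=\; \frac{n^{n+1}}{n!}\int_{z}^{\infty} e^{\,n(\log s - s)}\,ds .
\]
The phase $\log s - s$ has a nondegenerate maximum at the saddle $s=1$, equal to $-1$ with second derivative $-1$, so Laplace's method applied to this \emph{incomplete} integral — equivalently, the uniform error-function-type asymptotics of $\Gamma(n+1,nz)/\Gamma(n+1)$ — should give, uniformly for $z$ in a fixed neighborhood of $1$,
\[
	e^{-nz}\,s_n[\exp](nz) \;=\; \tfrac12\,\erfc\!\left(\sqrt{\tfrac n2}\,\eta(z)\right) + O\!\left(n^{-1/2}\right),
\]
where $\eta$ is the analytic function determined by $\tfrac12\eta(z)^2 = z-1-\log z$, so that $\eta(z)=(z-1)\bigl(1+O(z-1)\bigr)$ as $z\to 1$, and the constant $\tfrac12$ drops out of Stirling's formula. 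I expect this step to be the main obstacle: one must deform the path through the saddle and bound the remainder uniformly even when the lower endpoint $z$ is itself within $O(n^{-1/2})$ of the saddle. Everything after this is bookkeeping.

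Granting the asymptotic, I would put $z = 1+\sqrt{2/n}\,\zeta$. Then $\sqrt{n/2}\,\eta(z) = \zeta + O(n^{-1/2})$ uniformly for $\zeta$ in compact subsets of $\C$, so the rescaled functions $\zeta\mapsto e^{-nz}s_n[\exp](nz)$ converge uniformly on compacta to $\tfrac12\erfc(\zeta)$. Since $\erfc'(\zeta)=-\tfrac{2}{\sqrt\pi}e^{-\zeta^2}$ has no zeros, every zero of $\erfc$ is simple, so by Hurwitz's theorem the rescaled function has, for all large $n$, a zero $\zeta_n\to t_1$; equivalently $s_n[\exp](nz)$ has a zero $z_n = 1+\sqrt{2/n}\,(t_1+o(1))$.

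Finally I would estimate $\dist(z_n,D)$. From $\log\bigl|z e^{1-z}\bigr| = \log|z|-\Re(z-1) = -\tfrac12\Re\bigl((z-1)^2\bigr)+O(|z-1|^3)$ and $|z|^2-1 = 2\Re(z-1)+O(|z-1|^2)$, one reads off that inside a small disk about $1$ the curve $D$ agrees, up to an $O(|z-1|^2)$ error, with the two rays leaving $1$ in the directions $e^{\pm 3\pi i/4}$. Because $\Im t_1>0$ and $\arg t_1$ is (numerically) just under $3\pi/4$, the point of $D$ nearest to $z_n$ lies on the ray in direction $e^{3\pi i/4}$ with the perpendicular foot at positive distance along it, while the rest of $D$ stays a fixed positive distance from $z_n\to 1$; hence
\[
	\sqrt{n}\,\dist(z_n,D) \;=\; \sqrt2\,\bigl|\Im\bigl(\zeta_n e^{-3\pi i/4}\bigr)\bigr| + O\!\left(n^{-1/2}\right) \;\longrightarrow\; \sqrt2\,\bigl|\Im\bigl(t_1 e^{-3\pi i/4}\bigr)\bigr| \;=\; \Re(t_1)+\Im(t_1),
\]
the last equality using $\Re(t_1)+\Im(t_1)>0$. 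Since $\maxdist(\{z_{k,n}\}_{k=1}^n,D)\ge\dist(z_n,D)$, taking $\liminf$ gives the claimed bound, and $t_1\approx -1.3548+1.9914\,i$ gives the numerical value $\approx 0.636657$.
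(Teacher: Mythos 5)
This theorem is quoted in the paper from Carpenter--Varga--Waldvogel without proof, so there is no in-paper argument to compare against; your proposal is in fact a correct reconstruction of essentially the CVW argument. The chain of steps checks out: the identity $s_n[\exp](x)=e^x\Gamma(n+1,x)/n!$ is correct; the uniform error-function asymptotic for $\Gamma(n+1,nz)/n!$ near the transition point $z=1$ is the classical Tricomi/Temme expansion (the shift from $n$ to $n+1$ in the parameter only perturbs $\sqrt{n/2}\,\eta$ by $O(n^{-1/2})$, which is harmless); Hurwitz then produces a zero $z_n=1+\sqrt{2/n}\,(t_1+o(1))$; and the local geometry of $D$ at its corner (two arcs tangent to the directions $e^{\pm 3\pi i/4}$, with the foot of the perpendicular from $t_1$ landing on the $e^{3\pi i/4}$ ray since $\Re(t_1e^{-3\pi i/4})>0$) gives $\sqrt{n}\,\dist(z_n,D)\to\sqrt2\,|\Im(t_1e^{-3\pi i/4})|=\Re(t_1)+\Im(t_1)$, which lower-bounds the $\liminf$ of $\sqrt{n}\cdot\maxdist$. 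The only step you do not actually prove is the uniform $\erfc$ asymptotic itself, which you correctly flag as the technical heart; since it is a standard citable result, the proposal stands as a complete proof modulo that citation.
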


The authors also showed that the zeros which are bounded away from the point $z=1$ approach the Szeg\H{o} curve more quickly.

\begin{theorem}[CVW]
\label{cvwtheorem2}
Let $C_\delta$ be the ball of radius $\delta$ centered at $z=1$.  If $\{z_{k,n}\}_{k=1}^{n}$ are the zeros of $s_n[\exp](nz)$ and if $\delta$ is any fixed number with $0 < \delta \leq 1$, then
\[
	\maxdist\!\left(\{z_{k,n}\}_{k=1}^{n} \setminus C_\delta,\, D\right) = O\!\left(\frac{\log n}{n}\right)
\]
as $n \to \infty$.
\end{theorem}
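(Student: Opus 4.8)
The plan is to base everything on the exact Taylor‑remainder identity
\[
	e^{nz} - s_n[\exp](nz) = \frac{(nz)^{n+1}}{n!}\int_0^1 e^{nzt}(1-t)^n\,dt = \frac{n^{n+1}z^{n+1}}{n!}\int_0^1 e^{n\psi(t)}\,dt,\qquad \psi(t) := zt + \log(1-t),
\]
and to derive from it, for $z$ in a fixed thin neighborhood of $D\setminus C_\delta$, the two‑sided estimate
\[
	\bigl|e^{nz} - s_n[\exp](nz)\bigr| \asymp \frac{|e^{nz}|}{\sqrt{n}}\,\bigl|ze^{1-z}\bigr|^{n},
\]
with implied constants depending only on $\delta$. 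Two features of the Szeg\H{o} curve make the analysis on $D\setminus C_\delta$ uniform and explain the role of $C_\delta$. First, $z=1$ is the only point of $D$ on the unit circle; since the defining relation $|ze^{1-z}|=1$ reads $\Re z = 1+\log|z|$, this forces $|z|\le 1-c$, hence $\Re z\le 1-c'$, on $D\setminus C_\delta$ for positive $c,c'$ depending on $\delta$, and these bounds persist on a neighborhood. Second, $u(z) := \log|ze^{1-z}| = \log|z|+1-\Re z$ is the real part of the analytic function $\log z + 1 - z$, so $|\nabla u| = |\tfrac1z - 1|$, which is bounded away from $0$ on $D\setminus C_\delta$ since its only zero is $z=1$.

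To prove the two‑sided estimate, note that $|z|^{n+1}e^{n} = |z|\,|e^{nz}|\,|ze^{1-z}|^{n}$ and $\tfrac{n^{n+1}}{n!}\asymp \sqrt{n}\,e^{n}$ by Stirling, so the claim is equivalent to $\int_0^1 e^{n\psi(t)}\,dt\asymp 1/n$. This follows from a single integration by parts in $\int_0^1 e^{n\psi}\,dt = \tfrac1n\int_0^1 \tfrac1{\psi'}\,d\bigl(e^{n\psi}\bigr)$: the $t=1$ boundary term vanishes, the $t=0$ boundary term is $-\tfrac1{n(z-1)}$ with $|z-1|\ge\delta$, and the leftover integral is $O(1/n^2)$ because $(1/\psi')' = (z(1-t)-1)^{-2}$ is bounded (from $\Re z\le 1-c'$ one gets $|z(1-t)-1|\ge c'$) while $\int_0^1 e^{n\Re\psi(t)}\,dt = O(1/n)$, $\Re\psi$ being strictly decreasing on $[0,1]$ with $\Re\psi'(0) = \Re z - 1\le -c'$. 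Hence $\int_0^1 e^{n\psi(t)}\,dt = -\tfrac1{n(z-1)}\bigl(1+O(1/n)\bigr)$ uniformly. Consequently, if $z\notin C_\delta$ is a zero lying in the neighborhood, then $|ze^{1-z}|^{n}$ can be neither too small (else $|e^{nz}-s_n[\exp](nz)| < |e^{nz}|$, so $s_n[\exp](nz)\ne0$) nor too large (else $|e^{nz}-s_n[\exp](nz)| > 2|e^{nz}|$, so again $s_n[\exp](nz)\ne0$); quantitatively $|ze^{1-z}|^{n}\asymp\sqrt{n}$, whence
\[
	\bigl|\log|ze^{1-z}|\bigr| = \frac{\log n}{2n} + O\!\left(\frac1n\right) = O\!\left(\frac{\log n}{n}\right).
\]

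It remains to convert this into a distance bound. By Buckholtz's theorem every zero lies within $2e/\sqrt{n}$ of $D$, so a zero $z\notin C_\delta$ has a closest point $z_0\in D$ with $|z-z_0|\le 2e/\sqrt{n}$; for $n$ large $z_0\in D\setminus C_{\delta/2}$ and $z$ lies in the neighborhood used above. Near $z_0$ the curve $D$ coincides with the zero set of the harmonic function $u$, so the displacement $z-z_0$, being normal to $D$ at $z_0$, is parallel to $\nabla u(z_0)$; Taylor's formula then gives $|u(z)| \ge \tfrac12|\nabla u(z_0)|\,|z-z_0|$ once $|z-z_0|$ is small. Since $|\nabla u|$ is bounded below on $D\setminus C_{\delta/2}$, this rearranges to $\dist(z,D)\le |z-z_0|\le \mathrm{const}(\delta)\cdot\bigl|\log|ze^{1-z}|\bigr| = O(\log n/n)$, and taking the supremum over all zeros outside $C_\delta$ proves the theorem.

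The individual estimates — the integration by parts, the Laplace bound, and Stirling — are routine; the real work is making every constant uniform in $z$, which rests entirely on the two geometric facts about $D$ near its corner $z=1$, and it is precisely the degeneration $\nabla u(1)=0$ there that forces the weaker $O(1/\sqrt{n})$ behavior of Theorem~\ref{cvwtheorem1} and necessitates excising $C_\delta$ here. Recovering the sharp constant $\tfrac12$ in $\bigl|\log|ze^{1-z}|\bigr|\sim\tfrac{\log n}{2n}$ would in addition require tracking the $O(1)$ terms in the integration by parts and in Stirling, but the $O(\log n/n)$ bound in the statement does not.
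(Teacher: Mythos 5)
This statement is quoted from Carpenter--Varga--Waldvogel and the paper supplies no proof of it, so there is nothing internal to compare against; judged on its own terms, your argument is correct and complete. What you have done, in effect, is re-derive the leading term of Szeg\H{o}'s approximation \eqref{szegoapprox} --- the key formula the paper itself imports for its Theorem~\ref{curvetheo1} --- directly from the Taylor-remainder integral: your integration by parts gives $\int_0^1 e^{n\psi(t)}\,dt=-\tfrac{1}{n(z-1)}(1+O(1/n))$ uniformly once $|z-1|$ is bounded below and $\Re z$ is bounded away from $1$, which combined with Stirling yields exactly $e^{nz}-s_n[\exp](nz)=\tfrac{e^{nz}}{\sqrt{2\pi n}}\cdot\tfrac{z}{1-z}\,(ze^{1-z})^n(1+O(1/n))$. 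The two geometric observations you isolate are the right ones and check out: on $D\setminus C_\delta$ one has $\Re z\le 1-c'(\delta)$ (since $\Re z\to 1$ on $D$ forces $z\to 1$), and $|\nabla u|=|1/z-1|$ is bounded below there, so the level set $\{u=0\}$ near $D\setminus C_{\delta/2}$ consists only of $D$ (the other branches of $|ze^{1-z}|=1$ live in $\Re z\ge 1$) and the Taylor/normality argument legitimately converts $|u(z)|=O(\log n/n)$ into $\dist(z,D)=O(\log n/n)$. The one external input is Buckholtz's $2e/\sqrt{n}$ bound, which you need (and correctly invoke) to guarantee that every zero outside $C_\delta$ already lies in the thin neighborhood where your uniform estimates hold; since that result is stated in the paper immediately before this theorem, that is a fair dependency, though a purist could replace it by showing directly from your two-sided estimate that $s_n[\exp](nz)$ has no zeros in the rest of a fixed neighborhood of $D\setminus C_\delta$, plus the classical localization of zeros away from that neighborhood. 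Your closing remark correctly identifies the degeneration $\nabla u(1)=0$ as the reason $C_\delta$ must be excised and as the source of the slower $O(1/\sqrt{n})$ rate in Theorem~\ref{cvwtheorem1}.
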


There have been some similar results for other power series, notably those in a memoir by Edrei, Saff, and Varga concerning the Mittag-Leffler functions \cite{esv:sections} which was published prior to the CVW paper mentioned above.  Other examples include the sine and cosine (see \cite{vc:sincosasympiii} and the references therein), a class of confluent hypergeometric functions \cite{norfolk:1f1}, finite sums of exponentials \cite{mallison:expsums}, and even some classes of divergent power series \cite{dilcher:divergent}.  For a survey of this topic the reader is referred to \cite{mymscthesis}.

In this paper we will continue in this vein, studying power series defined by exponential integrals of a certain form, which we introduce in \hyperref[sec_prelims]{Section \ref*{sec_prelims}}.  Special cases of these functions include the aforementioned confluent hypergeometric functions, the prolate spheroidal wave functions (see \cite[sec.~V]{prolatespheroidal}), and Bessel functions of the first kind, the last of which we will discuss in detail in \hyperref[sec_specialcases]{Section \ref*{sec_specialcases}}.

Our results were particularly inspired by the work of Norfolk \cite{norfolk:1f1} on the confluent hypergeometric functions, defined by
\[
	{}_1F_1(1;b;z) = \Gamma(b) \sum_{k=0}^{\infty} \frac{z^k}{\Gamma(k+b)}.
\]
Norfolk studied the case where $b$ is real and $b \neq 1, 0, -1, -2, \ldots$.  His main tool was an exponential integral representation of the functions valid for $b > 1$ (see \hyperref[sec_specialcases]{Section \ref*{sec_specialcases}}).

Norfolk also studied a related family of integral transforms in \cite{norfolk:transforms}.

The results in this paper were originally obtained in the author's Master's thesis \cite{mymscthesis} and form an analogue to \hyperref[cvwtheorem2]{Theorem \ref*{cvwtheorem2}}.


\ifpdf
    \graphicspath{{sec_prelims/PNG/}{sec_prelims/PDF/}}
\else
    \graphicspath{sec_prelims/EPS/}
\fi

\section{Definitions and preliminaries}
\label{sec_prelims}

The functions we are interested in are defined by integrals of the form \linebreak $\int_{-a}^{b} \varphi(t) e^{zt}\,dt$.  The restrictions we place on the function $\varphi$ are determined essentially by the abilities of Watson's lemma, discussed below.

Suppose $0 \leq a,b < \infty$ and let $\varphi : [-a,b] \to \C \cup \{\infty\}$ be a measurable function satisfying
\[
	\int_{-a}^{b} |\varphi(t)|\,dt < \infty
\]
and $\varphi(t) = (t+a)^{\mu} f_1(t+a) = (b-t)^{\nu} f_2(b-t)$, where
\begin{enumerate}[label=(\arabic*)]
\item $\mu,\nu \in \C$ with $\Re(\mu) > -1$ and $\Re(\nu) > -1$,
\item $f_1,f_2 : [0,a+b] \to \C \cup \{\infty\}$ with $f_1(0)$ and $f_2(0)$ both finite and nonzero,
\item in a neighborhood of $t=0$, both $f_1'(t)$ and $f_2'(t)$ exist and are bounded.
\end{enumerate}
Define
\[
	F(z) = \int_{-a}^{b} \varphi(t) e^{zt}\,dt.
\]
It is a consequence of the dominated convergence theorem that the function $F$ is entire, and its sections are given by the formula
\begin{equation}
\label{snFdef}
	s_n[F](z) = \sum_{k=0}^{n} \frac{z^k}{k!} \int_{-a}^{b} \varphi(t) t^k\,dt.
\end{equation}
We can view $F(z)$ as the exponential generating function of these particular integral moments of $\varphi$.

Properties $(1)$, $(2)$, and $(3)$ above describe how the function $\varphi$ behaves near the endpoints of integration, and perhaps more importantly they name various quantities we will refer to throughout the paper.  Property $(3)$ serves a special purpose: it allows us to determine a simple error term in the asymptotic expansion of $F$ through the use of Watson's lemma (\hyperref[watsonlemma]{Theorem \ref*{watsonlemma}}).

Next we collect some theorems which will aid us in proving our results.  The first such theorem is due to Rosenbloom \cite{rosen:thesis} (see also \cite{rosen:distrib} for a summary of this reference).

A sequence of sections $\{s_N[f](z)\}$ is said to have a positive fraction of zeros in any sector with vertex at the origin if
\[
	\liminf_{N \to \infty} \frac{\#_N^{\angle}(\theta_1,\theta_2)}{N} > 0
\]
for any fixed $\theta_1$ and $\theta_2$, where $\#_N^{\angle}(\theta_1,\theta_2)$ is the number of zeros of the section $s_N[f](z)$ in the sector $\theta_1 \leq \arg z \leq \theta_2$.  Rosenbloom's result is as follows.

\begin{theorem}[Rosenbloom]
\label{rosentheo}
Let
\[
	f(z) = \sum_{k=0}^{\infty} a_k z^k
\]
be an entire function of finite positive order $\rho$ and let $s_n[f](z)$ be as in equation \eqref{sndef}.  Define $\rho_n = |a_n|^{-1/n}$.  There is an increasing sequence of indices $\{N\}$ such that the sequence of sections $\{s_N[f](z)\}$ has a positive fraction of zeros in any sector with vertex at the origin and, for every $\epsilon > 0$, the number of zeros of $s_N[f](z)$ satisfying
\[
	|z| \geq \left(e^{1/\rho} + \epsilon\right)\rho_N
\]
is bounded and all zeros lie in the disk
\[
	|z| \leq \left(2e^{1/\rho} + \epsilon\right)\rho_N
\]
for $N$ large enough.
\end{theorem}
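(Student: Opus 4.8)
The plan is to route the argument through the Wiman--Valiron data of $f$: the maximum term $\mu(r)=\max_{k}|a_k|r^{k}$ and the central index $\nu(r)$. I would take $\{N\}$ to consist of the values attained by $\nu$, and for each such $N$ pick a radius $R_N$ with $\nu(R_N)=N$, so that the single family of inequalities $|a_k|R_N^{k}\le|a_N|R_N^{N}$ ($k\ge0$) is at hand (in particular $a_N\ne0$, so $s_N[f]$ has degree $N$). Since $\rho=\limsup_{r\to\infty}\log\nu(r)/\log r$, from which $\log\mu(r)/\nu(r)$ has $1/\rho$ as a cluster value as $r\to\infty$, one can thin $\{N\}$ further (choosing the $R_N$ at such ``peaks'') so that $\tfrac1N\log\mu(R_N)\to\tfrac1\rho$; as $\tfrac1N\log\mu(R_N)=\log R_N-\log\rho_N$, this says exactly $R_N\sim e^{1/\rho}\rho_N$. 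All three conclusions will be proved along this subsequence. (After dividing out a power of $z$ we may assume $a_0\ne0$, so also $s_N[f](0)\ne0$.)

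For the disk bound, fix $R>2R_N$. On $|z|=R$ the central-index inequality gives $\sum_{k=0}^{N-1}|a_k|R^{k}\le|a_N|R^{N}\sum_{m=1}^{N}(R_N/R)^{m}<|a_N|R^{N}\cdot\frac{R_N}{R-R_N}<|a_N|R^{N}$, so by Rouch\'e's theorem $s_N[f]$ and $a_Nz^{N}$ have the same number of zeros in $|z|<R$; letting $R\downarrow2R_N$, all $N$ zeros of $s_N[f]$ lie in $|z|\le2R_N$, and $2R_N\le(2e^{1/\rho}+\epsilon)\rho_N$ once $N$ is large.

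For the statement about outlying zeros I would pass to the reversed polynomial $\tilde s_N(u)=u^{N}s_N[f](1/u)=\sum_{j=0}^{N}a_{N-j}u^{j}$, whose zeros in $0<|u|<1/R$ are precisely the reciprocals of the zeros of $s_N[f]$ with $|z|>R$, and which satisfies $\tilde s_N(0)=a_N\ne0$. Fix $c>1$ and $\beta\in(1,c)$. The bound $|a_{N-j}|\le|a_N|R_N^{\,j}$ gives, on $|u|=\beta/(cR_N)$, $|\tilde s_N(u)|\le|a_N|\sum_{j\ge0}(\beta/c)^{j}=|a_N|\,c/(c-\beta)$, so Jensen's formula bounds the number of zeros of $\tilde s_N$ in $|u|<1/(cR_N)$ by $\log\!\bigl(c/(c-\beta)\bigr)/\log\beta$, a quantity independent of $N$. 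Choosing $c$ close enough to $1$ that $cR_N\le(e^{1/\rho}+\epsilon)\rho_N$ for large $N$ shows that $s_N[f]$ has at most $\log\!\bigl(c/(c-\beta)\bigr)/\log\beta$ zeros with $|z|\ge(e^{1/\rho}+\epsilon)\rho_N$, uniformly over $N$ in the subsequence.

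The positive-fraction assertion is the substantive part, and I would argue potential-theoretically. Set $v_N(w)=\tfrac1N\log|s_N[f](R_Nw)|$; the central-index inequality gives $v_N(w)\le\tfrac1N\log\mu(R_N)+o(1)+\log^{+}|w|$, so the $v_N$ are subharmonic and locally uniformly bounded above. Passing to a further subsequence (which preserves the facts already established), $v_N\to v$ in $L^{1}_{\mathrm{loc}}$ with $v$ subharmonic, and the normalized zero-counting measures $\tfrac1N\sum_j\delta_{z_{j,N}/R_N}=\tfrac1{2\pi}\Delta v_N$ converge weak-$*$ to the probability measure $\lambda:=\tfrac1{2\pi}\Delta v$, supported in $|w|\le2$ by the disk bound. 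Estimates like those above show $v(w)=\tfrac1\rho+\log|w|$ for $|w|>2$, while $\limsup_N v_N(0)\le v(0)$ together with $v_N(0)=\tfrac1N\log|a_0|\to0$ forces $v(0)\ge0$. Now suppose $\lambda$ gave no mass to some open sector $S$ with vertex at the origin. Then $v$ is harmonic on $S$, and since it agrees on $S\cap\{|w|>2\}$ with the harmonic function $\tfrac1\rho+\log|w|$ it must equal that function on all of $S$. But then the circular means $\tfrac1{2\pi}\int_0^{2\pi}v(re^{i\theta})\,d\theta$ --- which are $\ge v(0)\ge0$ by subharmonicity --- tend to $-\infty$ as $r\to0^{+}$, because the contribution of the angles lying in $S$ does while $v$ stays bounded above; a contradiction. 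Hence $\lambda$ charges every open sector, and for fixed $\theta_1<\theta_2$ weak-$*$ convergence gives $\liminf_{N}\#_N^{\angle}(\theta_1,\theta_2)/N\ge\lambda(\{\theta_1<\arg w<\theta_2\})>0$. The main obstacle is the scaling step of the first paragraph: for a general entire function of order $\rho$ the ratio $\log\mu(r)/\nu(r)$ genuinely oscillates and need not converge, so the identity $R_N\sim e^{1/\rho}\rho_N$ is available only after passing to a subsequence --- this is precisely where the finiteness and positivity of the order, and the liberty of thinning the index sequence, enter in an essential way.
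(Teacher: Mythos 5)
The paper does not actually prove this statement: it is quoted from Rosenbloom's thesis, with the remark that Norfolk later gave a constructive proof, so there is no in-paper argument to compare yours against. Judged on its own terms, your proof is essentially correct and self-contained. The normalization via the central index ($R_N$ with $\nu(R_N)=N$, so that $\mu(R_N)=|a_N|R_N^N$ and $\tfrac1N\log\mu(R_N)=\log(R_N/\rho_N)$), the Rouch\'e argument for the disk $|z|\le 2R_N$, and the Jensen bound on the reversed polynomial for the outlying zeros are all sound. The sector argument also goes through: the same coefficient domination that gives Rouch\'e shows $v_N\to\tfrac1\rho+\log|w|$ locally uniformly on $|w|>2$ (the top term dominates the whole tail there), so your identification of $v$ with $\tfrac1\rho+\log|w|$ on $S\cap\{|w|>2\}$, the harmonic continuation into the sector, and the collapse of the circular means against $m(r)\ge v(0)\ge 0$ are all legitimate; the final passage from $\lambda(S)>0$ to the $\liminf$ statement is the portmanteau inequality for open sets. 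Since the theorem only asserts the existence of some subsequence, the repeated thinnings cost nothing.

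The one place where something is asserted rather than proved is the scaling step you yourself flag: the claim that $\log\mu(r)/\nu(r)$ has $1/\rho$ as a cluster value does not follow directly ``from'' $\rho=\limsup\log\nu(r)/\log r$, and without it the identification $R_N\sim e^{1/\rho}\rho_N$ --- on which all three conclusions rest --- is unsupported. The claim is true, and here is the missing argument. Write $\ell(u)=\log\mu(e^u)$ and $n(u)=\nu(e^u)$, so $\ell'(u)=n(u)$ a.e.\ with $n$ nondecreasing, and $\rho=\limsup_u\frac{\log n(u)}{u}=\limsup_u\frac{\log\ell(u)}{u}$. If $\ell/n\ge c>1/\rho$ eventually, then $\ell'\le\ell/c$ gives $\ell(u)=O(e^{u/c})$ and hence order $\le 1/c<\rho$; if $\ell/n\le c<1/\rho$ eventually, then $\ell'\ge\ell/c$ gives $\ell(u)\ge\mathrm{const}\cdot e^{u/c}$ and hence order $\ge 1/c>\rho$. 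Thus $\liminf\ell/n\le 1/\rho\le\limsup\ell/n$. Finally, $\ell/n$ increases continuously with slope $1$ in $u$ on each interval where $n$ is constant and only jumps \emph{downward} where $n$ jumps, so to pass from values below $1/\rho$ to values above it the function must actually attain $1/\rho$; this happens for arbitrarily large $u$, giving the required radii $R_N$. Adding these few lines closes the only genuine gap; the rest of the write-up can stand, modulo the standard facts about $L^1_{\mathrm{loc}}$ compactness of locally uniformly upper-bounded subharmonic families that you invoke correctly.
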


The means by which such a sequence of indices $\{N\}$ can be constructed is given by Norfolk in \cite{norfolk:widthconj}.  In doing so, Norfolk furnishes a constructive proof of the above result.

We will also require Watson's lemma on the asymptotic behavior of exponential integrals.  We refer the reader to \cite{miller:aaa} for a thorough discussion of this result.  In the following, $\lambda$ is a complex parameter.

\begin{theorem}[Watson's lemma]
\label{watsonlemma}
Suppose $0 < T \leq \infty$ and $\varphi : [0,T] \to \C \cup \{\infty\}$ is a function satisfying
\[
	\int_0^T |\varphi(t)|\,dt < \infty
\]
and $\varphi(t) = t^{\sigma}h(t)$, where $\Re(\sigma) > -1$, $h(0) \neq 0$, and $h'(t)$ exists and is bounded in a neighborhood of $t=0$.  Then the exponential integral
\[
	\Phi(\lambda) = \int_0^T \varphi(t) e^{-\lambda t}\,dt
\]
is finite for all $\Re(\lambda) > 0$, and
\[
	\Phi(\lambda) = \frac{h(0) \Gamma(\sigma+1)}{\lambda^{\sigma+1}} + O\!\left(\lambda^{-\sigma-2}\right)
\]
as $\lambda \to \infty$ with $|\arg \lambda| \leq \theta$ for any fixed $0 \leq \theta < \pi/2$.
\end{theorem}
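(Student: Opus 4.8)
The plan is to reduce everything to the single explicit evaluation $\int_0^\infty t^\sigma e^{-\lambda t}\,dt = \Gamma(\sigma+1)/\lambda^{\sigma+1}$ and to dispose of all remaining pieces with crude exponential estimates. Finiteness of $\Phi(\lambda)$ for $\Re(\lambda)>0$ is immediate: on $[0,T]$ one has $|e^{-\lambda t}| = e^{-\Re(\lambda)t}\le 1$, so $|\Phi(\lambda)| \le \int_0^T |\varphi(t)|\,dt < \infty$. For the asymptotics, fix $0\le\theta<\pi/2$ and observe that throughout the sector $|\arg\lambda|\le\theta$ we have the key inequality $\Re(\lambda)\ge|\lambda|\cos\theta$; this turns any factor $e^{-c\Re(\lambda)}$ with $c>0$ fixed into a quantity that is $O(|\lambda|^{-N})$ for every $N$, and any factor $\Re(\lambda)^{-s}$ into $O(|\lambda|^{-s})$. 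Since $\arg\lambda$ also stays bounded, $|\lambda^{-\sigma-1}|\asymp|\lambda|^{-\Re(\sigma)-1}$ and $|\lambda^{-\sigma-2}|\asymp|\lambda|^{-\Re(\sigma)-2}$, so it suffices to track real orders of magnitude.

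Next I would choose $\delta>0$ small enough that $h'$ exists and is bounded on $[0,\delta]$, say $|h'|\le M$ there; the mean value inequality then gives $|h(t)-h(0)|\le Mt$ for $t\in[0,\delta]$. Split $\Phi(\lambda)=\int_0^\delta+\int_\delta^T$. For $t\ge\delta$ and $\Re(\lambda)>0$ we have $e^{-\Re(\lambda)t}\le e^{-\Re(\lambda)\delta}$, so the tail is bounded by $e^{-\Re(\lambda)\delta}\int_0^T|\varphi(t)|\,dt$, which is exponentially small in $|\lambda|$ and hence absorbed into the $O(\lambda^{-\sigma-2})$ term. On $[0,\delta]$ write $h(t)=h(0)+g(t)$ with $|g(t)|\le Mt$, so that $\int_0^\delta t^\sigma h(t)e^{-\lambda t}\,dt = h(0)\int_0^\delta t^\sigma e^{-\lambda t}\,dt + \int_0^\delta t^\sigma g(t) e^{-\lambda t}\,dt$. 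The second integral is bounded in modulus by $M\int_0^\infty t^{\Re(\sigma)+1}e^{-\Re(\lambda)t}\,dt = M\,\Gamma(\Re(\sigma)+2)\,\Re(\lambda)^{-\Re(\sigma)-2}$, which is $O(|\lambda|^{-\Re(\sigma)-2})$, exactly the advertised order (this is precisely where property (3), the boundedness of the derivative, is used, rather than mere continuity, which would only yield $o(\lambda^{-\sigma-1})$).

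It remains to handle $h(0)\int_0^\delta t^\sigma e^{-\lambda t}\,dt$. I would extend the range to $[0,\infty)$ at the cost of $h(0)\int_\delta^\infty t^\sigma e^{-\lambda t}\,dt$; bounding $|t^\sigma|=t^{\Re(\sigma)}$ and integrating shows this correction is again exponentially small in $|\lambda|$. Finally, for the evaluation $\int_0^\infty t^\sigma e^{-\lambda t}\,dt = \Gamma(\sigma+1)/\lambda^{\sigma+1}$, substitute $u=\lambda t$ to get $\lambda^{-\sigma-1}\int_{R_\phi} u^\sigma e^{-u}\,du$ over the ray $R_\phi=\{re^{i\phi}:r\ge 0\}$ with $\phi=\arg\lambda$; since $u^\sigma e^{-u}$ is holomorphic in the right half-plane, Cauchy's theorem lets one rotate $R_\phi$ onto the positive real axis, the connecting arc at radius $R$ contributing at most $R^{\Re(\sigma)+1}e^{|\Im(\sigma)|\pi/2}e^{-R\cos\theta}\to 0$, with the integrand absolutely integrable near $u=0$ because $\Re(\sigma)>-1$. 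Collecting the four pieces yields $\Phi(\lambda)=h(0)\Gamma(\sigma+1)\lambda^{-\sigma-1}+O(\lambda^{-\sigma-2})$, with all implied constants depending only on $\varphi$ and $\theta$.

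\textbf{Main obstacle.} There is no deep difficulty here; the work is bookkeeping with the complex parameter — keeping every estimate uniform for $|\arg\lambda|\le\theta$, choosing a consistent branch for $t^\sigma$ on $(0,\infty)$ with complex $\sigma$ (so that $|t^\sigma|=t^{\Re(\sigma)}$), and justifying the contour rotation in the gamma integral. The rotation step is the one place where the hypothesis $\theta<\pi/2$ is genuinely essential.
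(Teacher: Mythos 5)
Your proof is correct, but it is worth noting that the paper does not prove this statement at all: Watson's lemma is quoted as a known classical result, with the reader referred to the literature (the reference \texttt{miller:aaa}) for its discussion, and the paper only supplies a proof of the subsequent Corollary by a change of variables reducing it to this theorem. What you have written is a complete, self-contained proof along the standard lines: the split at a small $\delta$ where $h'$ is bounded, the Lipschitz bound $|h(t)-h(0)|\le Mt$ feeding into the error term $O\!\left(\Re(\lambda)^{-\Re(\sigma)-2}\right)$, the exponentially small tail and completion of the main integral to $[0,\infty)$, and the contour rotation justifying $\int_0^\infty t^\sigma e^{-\lambda t}\,dt=\Gamma(\sigma+1)\lambda^{-\sigma-1}$ for complex $\lambda$ in the sector. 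All the estimates are uniform for $|\arg\lambda|\le\theta<\pi/2$ via $\Re(\lambda)\ge|\lambda|\cos\theta$, exactly where that hypothesis is needed, and your side remark that the bounded derivative (rather than mere continuity of $h$) is what upgrades the error from $o(\lambda^{-\sigma-1})$ to $O(\lambda^{-\sigma-2})$ matches the paper's own comment on the role of property (3). The only cosmetic caveat is that the mean value inequality for the complex-valued $h$ should be applied to real and imaginary parts (costing a harmless constant), and the one-sided nature of the neighborhood of $t=0$ in $[0,T]$ should be acknowledged; neither affects the argument. In short: what the paper buys by citation, you have supplied in full, and the argument is sound.
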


Though this form of Watson's lemma only gives an asymptotic for $\Phi(\lambda)$ as $\lambda \to \infty$ to the right, it can easily be extended to address the case when $\lambda \to \infty$ to the left if we assume that $T$ is finite.

\begin{corollary}
\label{watsoncoro}
Suppose $0 < T < \infty$ and $\varphi : [0,T] \to \C \cup \{\infty\}$ is a function satisfying
\[
	\int_0^T |\varphi(t)|\,dt < \infty
\]
and $\varphi(t) = (T-t)^{\sigma}h(T-t)$, where $\Re(\sigma) > -1$, $h(0) \neq 0$, and $h'(t)$ exists and is bounded in a neighborhood of $t=0$.  Then the exponential integral
\[
	\Phi(\lambda) = \int_0^T \varphi(t) e^{\lambda t}\,dt
\]
is finite for all $\Re(\lambda) > 0$, and
\[
	\Phi(\lambda) = \frac{h(0) \Gamma(\sigma+1)}{\lambda^{\sigma+1}} \,e^{T\lambda} + O\!\left(\lambda^{-\sigma-2} e^{T\lambda}\right)
\]
as $\lambda \to \infty$ with $|\arg \lambda| \leq \theta$ for any fixed $0 \leq \theta < \pi/2$.
\end{corollary}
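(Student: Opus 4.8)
The plan is to reduce Corollary \ref{watsoncoro} to Watson's lemma (Theorem \ref{watsonlemma}) by the change of variable $u = T - t$, which reflects the behavior at the right endpoint $t = T$ onto the left endpoint $u = 0$ where Watson's lemma applies. Setting $u = T - t$, so $t = T - u$ and $dt = -du$, the integral becomes
\[
	\Phi(\lambda) = \int_0^T \varphi(t) e^{\lambda t}\,dt = e^{T\lambda} \int_0^T \varphi(T-u) e^{-\lambda u}\,du.
\]
Define $\psi(u) = \varphi(T-u) = u^{\sigma} h(u)$. The hypotheses on $\varphi$ in the corollary — namely $\int_0^T |\varphi(t)|\,dt < \infty$, $\Re(\sigma) > -1$, $h(0) \neq 0$, and $h'$ existing and bounded near $0$ — translate verbatim into the hypotheses required of $\psi$ in Theorem \ref{watsonlemma}: the integrability of $\psi$ over $[0,T]$ is just the integrability of $\varphi$ after the substitution, and the local structure of $\psi$ at $u = 0$ is exactly the prescribed local structure of $\varphi$ at $t = T$.

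First I would verify these translated hypotheses carefully, since this is the only place any real checking is needed; in particular $h$ here plays the role of the function called $h$ in Theorem \ref{watsonlemma}, and $\sigma$ plays the role of $\sigma$. Then I would apply Theorem \ref{watsonlemma} directly to
\[
	\int_0^T \psi(u) e^{-\lambda u}\,du = \frac{h(0)\Gamma(\sigma+1)}{\lambda^{\sigma+1}} + O\!\left(\lambda^{-\sigma-2}\right)
\]
as $\lambda \to \infty$ with $|\arg \lambda| \leq \theta < \pi/2$. Multiplying both sides by $e^{T\lambda}$ gives
\[
	\Phi(\lambda) = \frac{h(0)\Gamma(\sigma+1)}{\lambda^{\sigma+1}} e^{T\lambda} + O\!\left(\lambda^{-\sigma-2} e^{T\lambda}\right),
\]
which is precisely the claimed asymptotic. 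The finiteness of $\Phi(\lambda)$ for $\Re(\lambda) > 0$ follows from the finiteness of $\int_0^T \psi(u) e^{-\lambda u}\,du$ guaranteed by Theorem \ref{watsonlemma}, together with the fact that $|e^{T\lambda}|$ is a finite constant for each fixed $\lambda$.

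There is no genuine obstacle here — the corollary is a routine transfer result — but the one point deserving care is bookkeeping with the complex parameters: one must check that $|\arg \lambda| \leq \theta$ is preserved (it is, since the substitution does not touch $\lambda$) and that the $O$-term, which in Theorem \ref{watsonlemma} is uniform in $\arg \lambda$ over the closed sector, remains uniform after multiplication by $e^{T\lambda}$ — it does, because $e^{T\lambda}$ is factored out identically from both the main term and the error term. I would also remark, as the text already hints, that finiteness of $T$ is essential: without it the factor $e^{T\lambda}$ is meaningless and the reflection $u = T - t$ is unavailable, which is why the corollary restricts to $0 < T < \infty$.
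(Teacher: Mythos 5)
Your proof is correct and is essentially identical to the paper's: both perform the substitution $u = T - t$ to write $\Phi(\lambda)e^{-T\lambda} = \int_0^T u^{\sigma}h(u)e^{-\lambda u}\,du$ and then invoke Watson's lemma directly. The extra remarks on uniformity of the error term and the necessity of $T < \infty$ are accurate but not needed beyond what the paper records.
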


\begin{proof}
We have
\begin{align*}
\Phi(\lambda) e^{-T\lambda} &= \int_0^T \varphi(t) e^{-\lambda(T-t)}\,dt \\
			&= \int_0^T \varphi(T-u) e^{-\lambda u}\,du \\
			&= \int_0^T u^{\sigma} h(u) e^{-\lambda u}\,du \\
			&= \frac{h(0) \Gamma(\sigma+1)}{\lambda^{\sigma+1}} + O\!\left(\lambda^{-\sigma-2}\right)
\end{align*}
as $\lambda \to \infty$ with $|\arg \lambda| \leq \theta$ for any fixed $0 \leq \theta < \pi/2$, by Watson's lemma.
\end{proof}

\ifpdf
    \graphicspath{{sec_curveasymp/PNG/}{sec_curveasymp/PDF/}}
\else
    \graphicspath{sec_curveasymp/EPS/}
\fi

\section{Main results}
\label{sec_curveasymp}

Recall from \hyperref[sec_prelims]{Section \ref*{sec_prelims}} that we are concerned with functions of the form
\[
	F(z) = \int_{-a}^{b} \varphi(t) e^{zt} \,dt
\]
with $\varphi$ satisfying some light requirements.  The $n^\text{th}$ section of $F$ is the polynomial
\[
	s_n[F](z) = \sum_{k=0}^{n} \frac{z^k}{k!} \int_{-a}^{b} \varphi(t) t^k \,dt.
\]

The statements and proofs of the main results of this section depend on the relative sizes of $a$ and $b$ and of $\Re(\mu)$ and $\Re(\nu)$.  To this end, define
\[
	c = \max\{a,b\}
\]
and
\[
	\xi = \begin{cases}
			\Re(\mu) & \text{if } a > b, \\
			\Re(\nu) & \text{if } a < b, \\
			\min\{\Re(\mu),\Re(\nu)\} & \text{if } a = b.
		  \end{cases}
\]

Further, let
\begin{equation}
	U = \left\{z \in \C : \left|ze^{1-z}\right| > 1 \right\} \cup \left\{z \in \C : \left|ze^{1-z}\right| \leq 1 \,\ \text{and} \,\ \Re(z) < 1 \right\}
\label{uregiondef}
\end{equation}
be the open region shown in \hyperref[uregion]{Figure \ref*{uregion}}.  Define
\begin{equation}
	V_{a,b} = \left\{z \in \C : -az \in U \,\ \text{and} \,\ bz \in U\right\}.
\label{vregiondef}
\end{equation}

Lastly, in the theorems below we require that, if $a=b$ and $\Re(\mu) = \Re(\nu)$, the indices $\{n\}$ of the sections are chosen so that quantity
\begin{equation}
\label{ncond}
	(-1)^n f_1(0) \Gamma(\mu+1) + f_2(0) \Gamma(\nu+1) a^{\nu-\mu} n^{\mu-\nu}
\end{equation}
is bounded away from $0$.  This condition is imposed to ensure that we can use the asymptotic representations derived in \hyperref[coeffasymp]{Lemma \ref*{coeffasymp}} and \hyperref[tailasymp]{Lemma \ref*{tailasymp}} without incident.

\begin{figure}[h!tb]
	\centering
	\includegraphics[width=0.4\textwidth]{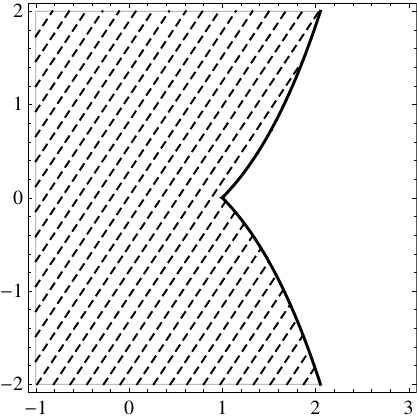}
	\caption{Dashed lines indicate the open region $U$ defined in \eqref{uregiondef}.  The curve $\left|ze^{1-z}\right| = 1$, $\Re(z) \geq 1$, shown as a solid line, is the boundary of this region.}
\label{uregion}
\end{figure}

\begin{theorem}
\label{curvetheo1}
Let $F$ be an exponential integral function as described in \hyperref[sec_prelims]{Section \ref*{sec_prelims}} and let $\{n\}$ be any subsequence of the natural numbers.
\begin{enumerate}[label=(\roman*)]
\item Let $\{z_n\}$ be a sequence of complex numbers such that $s_n[F](nz_n) = 0$ for all $n$ which has a limit point in the region $V_{a,b} \cap \{z \in \C : \Re(z) < 0\}$.  Then the elements of the sequence satisfy
\[
	\left|cz_ne^{1+az_n}\right| = 1 + \left(\xi - \Re(\mu) + \frac{1}{2}\right) \frac{\log n}{n} + O(1/n)
\]
as $n \to \infty$.
\item Let $\{z_n\}$ be a sequence of complex numbers such that $s_n[F](nz_n) = 0$ for all $n$ which has a limit point in the region $V_{a,b} \cap \{z \in \C : \Re(z) > 0\}$.  Then the elements of the sequence satisfy
\[
	\left|cz_ne^{1-bz_n}\right| = 1 + \left(\xi - \Re(\nu) + \frac{1}{2}\right) \frac{\log n}{n} + O(1/n)
\]
as $b \to \infty$.
\end{enumerate}
\end{theorem}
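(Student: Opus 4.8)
\emph{Setup.} The plan is to study a zero $z_N$ of the section $s_N[F](Nz)$ through the decomposition
\[
	s_N[F](Nz) = F(Nz) - R_N(Nz), \qquad R_N(Nz) = \sum_{k>N}\frac{(Nz)^k}{k!}\,m_k, \qquad m_k = \int_{-a}^{b}\varphi(t)\,t^k\,dt,
\]
by finding the dominant asymptotics of the two terms on the right and balancing them. The substitution $z\mapsto -z$, $t\mapsto -t$ exchanges the data $(a,\mu,f_1)\leftrightarrow(b,\nu,f_2)$ and the two half-planes while preserving $c$, $\xi$, the region $V_{a,b}$, the subsequence $\{N\}$, and the condition \eqref{Ncond}; hence part (i) is equivalent to part (ii), and I would only treat part (ii), the case $\Re z>0$.

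\emph{Two auxiliary asymptotics.} First I would record two lemmas. \emph{Moments:} applying Watson's lemma in the form of Corollary \ref{watsoncoro} at each endpoint of $\int_{-a}^{b}\varphi(t)t^k\,dt$ gives
\[
	m_k = (-1)^k\,\frac{f_1(0)\Gamma(\mu+1)\,a^{k+\mu+1}}{k^{\mu+1}} + \frac{f_2(0)\Gamma(\nu+1)\,b^{k+\nu+1}}{k^{\nu+1}} + (\text{smaller terms}),
\]
which in particular yields $\rho_N=|m_N/N!|^{-1/N}\sim N/(ce)$, so that (via Theorem \ref{rosentheo} with $\rho=1$) $s_N[F](Nz)$ is the correctly normalised section. \emph{Tail:} inserting this expansion into $R_N(Nz)$ and splitting according to the two endpoints gives $R_N(Nz)=R_N^{(a)}(Nz)+R_N^{(b)}(Nz)+(\text{error})$, where $R_N^{(a)}(Nz)=f_1(0)\Gamma(\mu+1)a^{\mu+1}\sum_{k>N}(-aNz)^k/(k!\,k^{\mu+1})$ and $R_N^{(b)}$ is the analogue with $b,\nu$; these are tails of confluent-hypergeometric-type series. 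Using Stirling's formula together with the geometric-type estimate for such a tail — whose validity precisely on the set $\{-az\in U\}\cap\{bz\in U\}$ is what the definition of $V_{a,b}$ is designed to furnish — each piece equals its leading term times a factor that is bounded and bounded away from $0$ uniformly on compact subsets of $V_{a,b}\cap\{\Re z>0\}$ (the excluded poles $z=\pm 1/a,\pm1/b$ are automatically not in $V_{a,b}$), whence $R_N^{(a)}(Nz)\asymp(aze)^N/(\sqrt{N}\,N^{\Re\mu+1})$ and $R_N^{(b)}(Nz)\asymp(bze)^N/(\sqrt{N}\,N^{\Re\nu+1})$ in modulus. The endpoint of larger modulus $c=\max\{a,b\}$ — carrying the exponent $\xi$ — dominates; when $a=b$ and $\Re\mu=\Re\nu$ the two contributions have the same order and could cancel, which is exactly why one restricts to the subsequence $\{N\}$ along which \eqref{Ncond} stays bounded away from $0$. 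In all cases $R_N(Nz)\asymp(cze)^N/(\sqrt{N}\,N^{\xi+1})$ on compact subsets of $V_{a,b}\cap\{\Re z>0\}$.

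\emph{Asymptotics of $F$ and balancing.} On compact subsets of $\{\Re z>0\}$, Corollary \ref{watsoncoro} applied to $F(Nz)=\int_{-a}^{b}\varphi(t)e^{Nzt}\,dt$ at the endpoint $t=b$ gives $F(Nz)=\dfrac{f_2(0)\Gamma(\nu+1)}{(Nz)^{\nu+1}}\,e^{bNz}\bigl(1+O(1/N)\bigr)$. Now let $z_N\to z_0\in V_{a,b}\cap\{\Re z>0\}$ with $s_N[F](Nz_N)=0$, so that $F(Nz_N)=R_N(Nz_N)$; since $R_N(Nz_N)\ne 0$ we may take logarithms. If $|cz_0e^{1-bz_0}|<1$ then $|F(Nz_N)|/|R_N(Nz_N)|\to\infty$ and the section is nonzero for large $N$, a contradiction; if $|cz_0e^{1-bz_0}|>1$ the reverse inequality forces $s_N[F](Nz_N)\approx -R_N(Nz_N)\ne 0$, again a contradiction — so $z_0$ lies on the curve $|cze^{1-bz}|=1$. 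For the refined statement one takes absolute values in $F(Nz_N)=R_N(Nz_N)$ and divides by $N$ after taking logarithms: the exponential factors combine, since $|(cz_Ne)^N e^{-bNz_N}|=|cz_Ne^{1-bz_N}|^N$; the powers of $N$ combine to $N^{\xi-\Re\nu+\frac12}$, the $\tfrac12$ coming from Stirling and the $\xi-\Re\nu$ from the discrepancy between the exponent $-\nu-1$ of $N$ in $F(Nz)$ and $-\xi-1$ in $R_N(Nz)$; and every remaining factor is bounded above and below, hence contributes only $O(1/N)$ after division by $N$. This gives $N\log|cz_Ne^{1-bz_N}|=(\xi-\Re\nu+\tfrac12)\log N+O(1)$, and exponentiating yields the claimed expansion.

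\emph{Main difficulty.} I expect the real work to be the tail asymptotics: showing that $\sum_{k>N}(\beta Nz)^k/(k!\,k^{s})$ is genuinely asymptotic to its leading term with a serviceable error bound, uniformly on compact subsets of $\{\beta z\in U\}$, and combining the two endpoint contributions — delicately in the balanced case $a=b$, $\Re\mu=\Re\nu$, where the condition \eqref{Ncond} is precisely what keeps the combined leading term from degenerating. This is also where the somewhat elaborate description of $U$ earns its keep: $U$ is essentially the region in which the section of the exponential has the clean two-term structure ``$e^{Nw}$ minus a nonvanishing leading tail term'' on which the balancing argument relies, the excluded cusp to the right of $w=1$ being where that structure breaks down. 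By contrast, the asymptotics of $F(Nz)$ are a direct application of Watson's lemma and the final balancing is routine algebra.
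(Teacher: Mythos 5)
Your overall architecture --- write $s_N[F](Nz)=F(Nz)-R_N(Nz)$, extract the exponential order of each side, and balance at a zero --- coincides with the paper's, and your endgame is exactly right: the asymptotic for $F(Nz)$ is Lemma \ref{Fasymp} (Watson's lemma at the dominant endpoint), the $\tfrac12$ does come from Stirling, the $\xi-\Re(\nu)$ from the mismatch between the powers $N^{-\nu-1}$ and $N^{-\xi-1}$, factors bounded above and below away from zero do contribute only $O(1/N)$ after taking $N$-th roots, and condition \eqref{Ncond} is indeed there to prevent cancellation of the two endpoint contributions in the balanced case $a=b$, $\Re(\mu)=\Re(\nu)$. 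The genuine divergence --- and the place where your plan as written would fail --- is the treatment of $R_N$. The paper never touches the series tail term by term: it writes $F(nz)-s_n[F](nz)=\int_{-a}^{b}\varphi(t)\,e^{nzt}g_n(zt)\,dt$ with $g_n(w)=1-e^{-nw}s_n[\exp](nw)$, invokes Szeg\H{o}'s uniform asymptotic $g_n(w)=\frac{(we^{1-w})^n}{\sqrt{2\pi n}}\,\frac{w}{1-w}\,(1+O(1/n))$ on compact subsets of $U$ (this, not a tail estimate for a series, is what the hypotheses $-az\in U$ and $bz\in U$ are designed to license), and is then left with the Watson-type integral $\frac{e^nz^{n+1}}{\sqrt{2\pi n}}\int_{-a}^{b}\frac{\varphi(t)}{1-zt}\,t^{n+1}\,dt$, which Lemma \ref{tailasymp} evaluates by the same endpoint localization as Lemma \ref{coeffasymp}.

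The concrete gap in your version is the step ``insert the moment expansion into $R_N$ and split off an error.'' The error terms in $m_k$ are $O(k^{-\Re\mu-2}a^k)+O(k^{-\Re\nu-2}b^k)$, and the only way to sum them against $(Nz)^k/k!$ without further structure is in absolute value. But $\sum_{k>N}\frac{(bN|z|)^k}{k!}k^{-\Re\nu-2}$ is of order $e^{bN|z|}N^{-\Re\nu-2}$ once $b|z|>1$, whereas the main term you are comparing against has modulus of order $(b|z|e)^N N^{-\Re\nu-3/2}$; since $e^{x}>xe$ for all $x\neq 1$, the ``error'' then dominates the ``main term'' exponentially. The region $V_{a,b}\cap\{\Re z>0\}$ does contain points with $b|z|>1$ or $a|z|>1$ (for instance $U$ contains $2i$, so it reaches well outside the closed unit disk), and the theorem must cover zero sequences accumulating at any such point, so you cannot simply restrict to $c|z|\leq 1$. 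The same obstruction afflicts your unproved key claim that $\sum_{k>N}(\beta Nz)^k/(k!\,k^{s})$ equals its leading term up to a bounded nonvanishing factor uniformly on compacts of $\{\beta z\in U\}$: for $|\beta z|>1$ this is a statement about massive cancellation among oscillating terms, not a geometric-tail estimate, and it is essentially equivalent to Szeg\H{o}'s theorem. The natural repair is to not expand the moments at all but to interchange $\sum_{k>N}$ with $\int_{-a}^{b}$, which returns you precisely to $\int_{-a}^{b}\varphi(t)e^{Nzt}g_N(zt)\,dt$ and hence to the paper's route; there the oscillation is handled once and for all by the Szeg\H{o} formula, and the endpoint localization of $\int_{-a}^{b}\frac{\varphi(t)}{1-zt}t^{N+1}\,dt$ plays the role of your splitting into $R_N^{(a)}+R_N^{(b)}$. (Your reduction of part (i) to part (ii) by the symmetry $z\mapsto-z$, $t\mapsto-t$ is a harmless economy not used in the paper.)
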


\begin{theorem}
\label{curvetheo2}
Let $\{N\}$ be a sequence of the natural numbers as described in \hyperref[rosentheo]{Theorem \ref*{rosentheo}}.  It is true that
\begin{enumerate}[label=(\roman*)]
\item Every point on the curve
\begin{align*}
	D_{a,b} &= \left\{z \in \C : \Re(z) \leq 0,\,\,\, |z| \leq \frac{1}{c},\,\,\, \text{and } \left|cze^{1+az}\right| = 1\right\} \\
		&\qquad \cup \left\{z \in \C : \Re(z) \geq 0,\,\,\, |z| \leq \frac{1}{c},\,\,\, \text{and } \left|cze^{1-bz}\right| = 1\right\}
\end{align*}
is a limit point of the zeros of the normalized sections $s_N[F](Nz)$.
\item The limit points of the zeros of the normalized sections $s_N[F](Nz)$ on the imaginary axis lie on the line segment
\[
	D_{\textnormal{imag}} = \left\{z \in \C : \Re(z) = 0 \,\,\,\text{and}\,\,\, |z| \leq \frac{1}{ec} \right\}.
\]
\item Every limit point of the zeros of the normalized sections $s_N[F](Nz)$ not in $D_{a,b} \cup D_\textnormal{imag} \cup\{\pm 1/c\}$ is isolated and lies in the closed region
\[
	E_{a,b} = \left\{z \in \C : |z| \leq \frac{2}{c}\right\} \setminus V_{a,b}.
\]
\end{enumerate}
\end{theorem}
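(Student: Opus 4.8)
\emph{Proof proposal.}
The three parts rest on the same package of estimates. From \hyperref[coeffasymp]{Lemma~\ref*{coeffasymp}} one reads off that $|a_N|^{-1/N}\sim N/(ec)$, so that $F$ has order $\rho=1$ and $\rho_N\sim N/(ec)$; from \hyperref[watsonlemma]{Theorem~\ref*{watsonlemma}} and \hyperref[watsoncoro]{Corollary~\ref*{watsoncoro}} one gets that $F(Nz)$ is governed, to leading order, by the $t=-a$ endpoint when $\Re z<0$ and by the $t=b$ endpoint when $\Re z>0$ (and by a balance of the two near the imaginary axis, using hypothesis~$(3)$ through one integration by parts); and from \hyperref[tailasymp]{Lemma~\ref*{tailasymp}} one gets that the tail $T_N(z):=\sum_{k>N}a_k(Nz)^k=F(Nz)-s_N[F](Nz)$ has $|T_N(z)|$ comparable to $(c|z|e)^N N^{-\xi-3/2}$ times a non-vanishing slowly varying factor, uniformly on the relevant regions — the condition~\eqref{Ncond} being exactly what rules out spurious cancellation in the degenerate case $a=b$, $\Re\mu=\Re\nu$. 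Two elementary remarks will be used repeatedly: since $\{|w|<1\}\subseteq U$ we have $\{|z|<1/c\}\subseteq V_{a,b}$, hence $E_{a,b}\subseteq\{1/c\le|z|\le 2/c\}$; and since $-az$ and $bz$ both have vanishing real part on the imaginary axis, the whole imaginary axis lies in $V_{a,b}$.

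For part~(i) I would first reduce the claim, using that the set of limit points of the zeros is closed and that $D_{a,b}$ is a curve, to showing that every $z_0\in D_{a,b}$ with $\Re z_0\neq 0$ and $z_0\notin\{\pm 1/c\}$ is a limit point. Such a $z_0$ lies in the open region $V_{a,b}$, so on a neighbourhood of $z_0$ the estimates above give $s_N[F](Nz)=F(Nz)-T_N(z)$ with $T_N$ non-vanishing and, say for $\Re z_0<0$, $F(Nz)/T_N(z)=Q_N(z)\bigl(cze^{1+az}\bigr)^{-N}$, where $Q_N$ is analytic, non-vanishing (here $\Re z_0<0$ is used: $F$ is non-vanishing at points going to infinity in a sector around the negative real axis), slowly varying, and of polynomial size $|Q_N(z)|$ comparable to $N^{\,\xi-\Re\mu+1/2}$ — exactly the size that makes $|cze^{1+az}|$ obey the asymptotic of \hyperref[curvetheo1]{Theorem~\ref*{curvetheo1}}(i). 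Since $|cz_0e^{1+az_0}|=1$ and $z\mapsto cze^{1+az}$ is a local biholomorphism near $z_0$ (as $z_0\notin\{0,-1/a\}$), the image of a disc of radius $C(\log N)/N$ about $z_0$ under $z\mapsto\bigl(cze^{1+az}\bigr)^N$ covers, for $C$ large, an annulus with radii $N^{\pm\kappa C}$ for some $\kappa>0$, which therefore contains $Q_N(z_0)$; a Rouch\'e argument absorbing the slow variation of $Q_N$ then produces a solution of $F(Nz)/T_N(z)=1$ within $O((\log N)/N)$ of $z_0$, i.e.\ a zero of $s_N[F](Nz)$ there. The case $\Re z_0>0$ is identical with $a$ replaced by $b$ and the sign of the exponent changed.

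For part~(ii), let $z^\ast=iy^\ast$ be a limit point on the imaginary axis and pick zeros $z_N\to iy^\ast$ with $s_N[F](Nz_N)=0$. Then $F(Nz_N)=T_N(z_N)$, so on one hand $|T_N(z_N)|=|F(Nz_N)|\le\|\varphi\|_1\,e^{cN|\Re z_N|}=e^{o(N)}$ because $\Re z_N\to 0$, while on the other hand \hyperref[tailasymp]{Lemma~\ref*{tailasymp}} gives $|T_N(z_N)|\gtrsim(c|z_N|e)^N N^{-\xi-3/2}$ near the imaginary axis. Combining these, $(c|z_N|e)^N\le e^{o(N)}$, and taking $N$-th roots while using $|z_N|\to|y^\ast|$ and $\Re z_N\to 0$ forces $c|y^\ast|e\le 1$, i.e.\ $|y^\ast|\le 1/(ec)$, so $z^\ast\in D_{\mathrm{imag}}$.

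For part~(iii): that every limit point lies in $\{|z|\le 2/c\}$ is immediate from \hyperref[rosentheo]{Theorem~\ref*{rosentheo}} with $\rho=1$ and $\rho_N\sim N/(ec)$. If $z^\ast$ is a limit point lying in $V_{a,b}$, then for $\Re z^\ast<0$, \hyperref[curvetheo1]{Theorem~\ref*{curvetheo1}}(i) gives $|cz^\ast e^{1+az^\ast}|=1$, from which, together with $-az^\ast\in U$, one checks directly that $|z^\ast|\le 1/c$, so $z^\ast\in D_{a,b}$; the case $\Re z^\ast>0$ is symmetric via \hyperref[curvetheo1]{Theorem~\ref*{curvetheo1}}(ii), and the case $\Re z^\ast=0$ is part~(ii). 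Hence every limit point outside $D_{a,b}\cup D_{\mathrm{imag}}\cup\{\pm 1/c\}$ lies in $\{|z|\le 2/c\}\setminus V_{a,b}=E_{a,b}$. For isolatedness, one uses that by \hyperref[rosentheo]{Theorem~\ref*{rosentheo}} only a bounded number of normalized zeros have $|z|\ge(1+\epsilon)/c$, together with the observation that on the interior of $E_{a,b}$ the term $F(Nz)$ is subdominant and $s_N[F](Nz)\sim -T_N(z)$, which is non-vanishing except near rescaled zeros of $F$ or near the excluded points $\pm 1/c$, so that the limit points trapped in $E_{a,b}$ form at most an isolated set. I expect the sharp two-sided control of $T_N$ up to the imaginary axis — especially in the regime $|cz|\ge 1$, where the peak of the series defining $T_N$ sits beyond index $N$ — together with the bookkeeping that prevents a continuum of limit points from accumulating in $E_{a,b}$ (say along $\partial V_{a,b}$ or along a rescaled zero set of $F$), to be the principal technical obstacle; this is precisely the work that hypothesis~$(3)$ and the choice of the subsequence $\{N\}$ via~\eqref{Ncond} are designed to make possible.
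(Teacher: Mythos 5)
Your proposal is sound, and for parts (ii) and (iii) it follows essentially the same route as the paper: part (ii) by playing the trivial bound $|F(Nz_N)|\le\|\varphi\|_1 e^{cN|\Re z_N|}=e^{o(N)}$ against the lower bound on the tail from Lemma~\ref{tailasymp} (your version, which tracks $\Re z_N\to 0$ rather than placing $z_N$ exactly on the axis, is if anything slightly more careful than the paper's), and the isolatedness in part (iii) by the Rosenbloom count of zeros beyond radius $(1+\epsilon)N/c$, which is exactly the content of the paper's Lemma~\ref{restrictlemma}. Part (i) is where you genuinely diverge. The paper argues indirectly: Theorem~\ref{rosentheo} puts a positive fraction of the zeros in every sector, the limit points with nonzero real part are confined to $D_{a,b}\cup\{\pm 1/c\}$ together with an isolated set, and since $D_{a,b}$ meets each ray from the origin in exactly one point, every point of $D_{a,b}$ must be a limit point. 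You instead construct zeros directly near each $z_0\in D_{a,b}\cap V_{a,b}$ by a winding/Rouch\'e argument applied to $\left(cze^{1+az}\right)^N=Q_N(z)$, using the local biholomorphy of $z\mapsto cze^{1+az}$ away from $0$ and $-1/a$ and the polynomial size of $Q_N$. Your route is more work to make airtight --- one must check that the relative variation of $Q_N$ over a disc of radius $C(\log N)/N$ is $o(1)$ so that Rouch\'e applies near a simple zero of the frozen equation, and that $T_N$ is genuinely non-vanishing there, which is what condition \eqref{Ncond} is for --- but it buys something the paper's argument does not: it is constructive and localizes a zero to within $O((\log N)/N)$ of every point of the curve, rather than inferring existence from a counting argument.

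One caveat on part (iii): your auxiliary remark that on the interior of $E_{a,b}$ the term $F(Nz)$ is subdominant and $s_N[F](Nz)\sim -T_N(z)$ is not supported by the lemmas you invoke. The Szeg\H{o} approximation \eqref{szegoapprox}, and hence Lemma~\ref{tailasymp}, holds only on compact subsets of $V_{a,b}$, and $E_{a,b}$ is by definition disjoint from $V_{a,b}$, so no two-sided control of the tail is available there. Fortunately the remark is superfluous: since $E_{a,b}\setminus\{\pm 1/c\}\subseteq\{z:1/c<|z|\le 2/c\}$, the Rosenbloom counting argument alone already yields isolatedness, exactly as in the paper.
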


\begin{remark}
\label{curvetheoremark}
The expressions in parts $(i)$ and $(ii)$ of \hyperref[curvetheo1]{Theorem \ref*{curvetheo1}} give information about whether the zeros eventually lie on the inside or the outside of the limit curve based on the signs of the quantities $\xi - \Re(\mu) + 1/2$ and $\xi - \Re(\nu) + 1/2$.  For example, if $\xi - \Re(\mu) + 1/2 > 0$, then it will eventually be true that the zeros which approach $D_{a,b}$ in the left half-plane will satisfy $\left|cze^{1+az}\right| > 1$ and hence will lie outside of $D_{a,b}$.  If either of the quantities $\xi - \Re(\mu) + 1/2$ or $\xi - \Re(\nu) + 1/2$ is zero then the theorem does not give any information about the direction from which the zeros approach the relevant part of the curve.
\end{remark}

The curve $D_{a,b}$ contains at least one of the points $\pm 1/c$, so we can interpret part (iii) of \hyperref[curvetheo2]{Theorem \ref*{curvetheo2}} to mean that at most one of any limit points not on $D_{a,b} \cup D_\textnormal{imag}$ is not isolated.  This limit point, if it exists, is the element of the pair $\pm 1/c$ which is not on $D_{a,b}$.

\hyperref[eiplot1]{Figure \ref*{eiplot1}} and \hyperref[eiplot2]{Figure \ref*{eiplot2}} illustrate two interesting cases of the results in \hyperref[curvetheo1]{Theorem \ref*{curvetheo1}} and \hyperref[curvetheo2]{Theorem \ref*{curvetheo2}}.  The reader may refer to \cite{mymscthesis} for more plots of this kind.  Two examples of the set $E_{a,b}$ defined in part (iii) of \hyperref[curvetheo2]{Theorem \ref*{curvetheo2}} are shown in \hyperref[eiplot1]{Figure \ref*{extrazeros}}.

\begin{figure}[htb]
	\centering
	\includegraphics[width=0.9\textwidth]{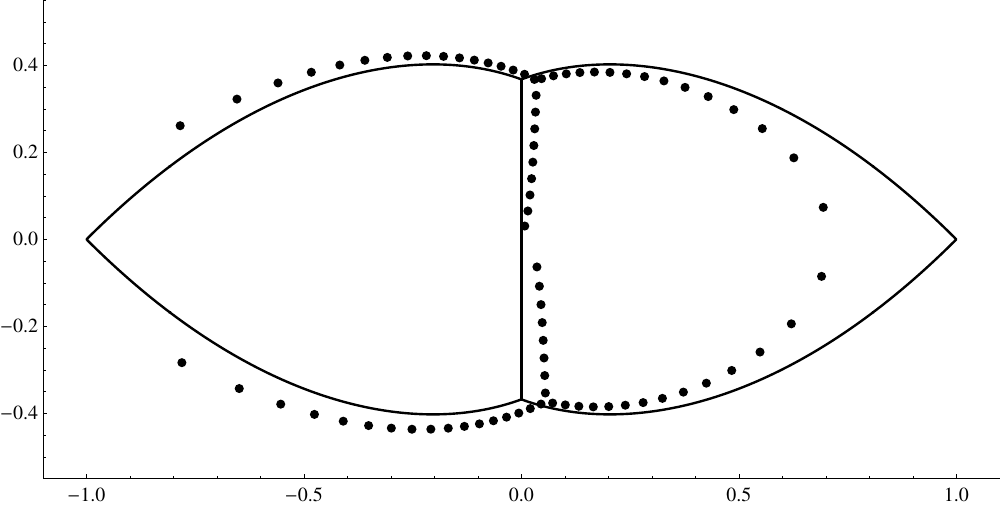}
	\caption{Zeros of the normalized section $s_{80}[F](80z)$ and the limit curve $D_{a,b}\cup D_{\textnormal{imag}}$ with $a=b=1$ and $\varphi(t) = (1-t)^{3/2}(1+t)^{-1/2+i}$.  Note that $\xi - \Re(\nu) + 1/2 = -3/2 < 0$, which predicts that the zeros in the right half-plane will approach the limit curve from the interior.}
\label{eiplot1}
\end{figure}

\begin{figure}[t!]
	\centering
	\includegraphics[width=0.9\textwidth]{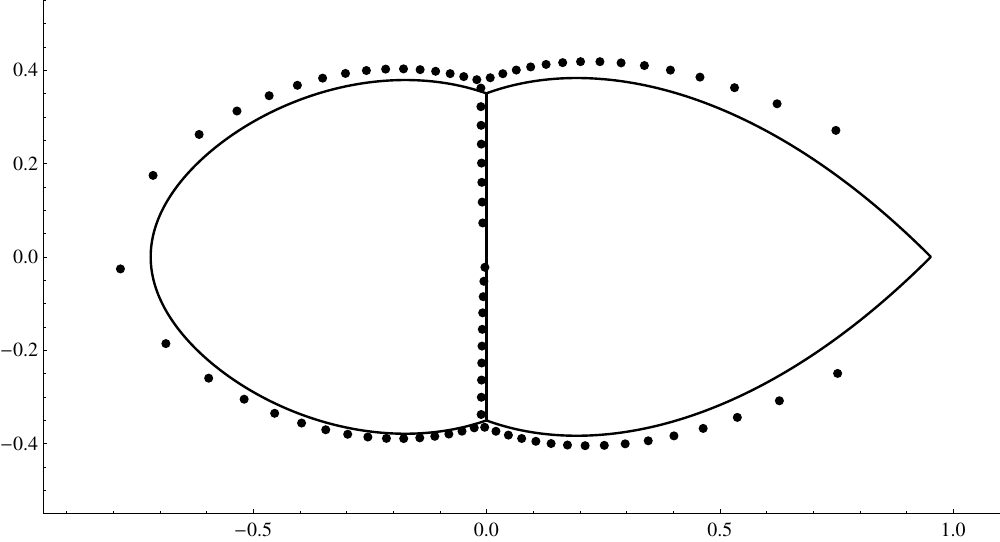}
	\caption{Zeros of the normalized section $s_{80}[F](80z)$ and the limit curve $D_{a,b}\cup D_{\textnormal{imag}}$ with $a=1$, $b=\frac{21}{20}$, and $\varphi(t) = (\frac{21}{20}-t)^{i}(1+t)^{1/2-i}$.  Note that $\xi - \Re(\mu) + 1/2 = 0$, which predicts that the zeros in the left half-plane will approach the limit curve at a rate of $O(1/n)$.}
\label{eiplot2}
\end{figure}

To prove these theorems we will require a few lemmas, the first of which concerns the asymptotic behavior of the function $F$.

\begin{lemma}
\label{Fasymp}
As $n \to \infty$,
\[
	F(nz) = f_1(0) \Gamma(\mu+1) (-nz)^{-\mu-1} e^{-anz} \Bigl(1 + O(1/n) \Bigr)
\]
when $z$ is restricted to a compact subset of $\Re(z) < 0$, and
\[
	F(nz) = f_2(0) \Gamma(\nu+1) (nz)^{-\nu-1} e^{bnz} \Bigl(1 + O(1/n) \Bigr)
\]
when $z$ is restricted to a compact subset of $\Re(z) > 0$.
\end{lemma}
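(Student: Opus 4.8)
The plan is to reduce $F(nz)$ to a one-sided exponential integral of the type handled by Watson's lemma and its corollary. First I would shift the variable of integration by $u = t+a$, which gives
\[
	F(nz) = e^{-anz}\int_0^{a+b} \varphi(u-a)\, e^{nzu}\,du,
\]
and record that $\varphi(u-a) = u^{\mu} f_1(u) = (a+b-u)^{\nu} f_2(a+b-u)$ from the two factorizations of $\varphi$. The integrability assumption on $\varphi$ becomes $\int_0^{a+b}|\varphi(u-a)|\,du < \infty$, while properties $(1)$--$(3)$ say precisely that near $u=0$ the integrand equals $u^{\mu} f_1(u)$ with $\Re(\mu) > -1$, $f_1(0)$ finite and nonzero, and $f_1'$ bounded, and that near $u=a+b$ it equals $(a+b-u)^{\nu} f_2(a+b-u)$ with the analogous conditions on $f_2$. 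These are exactly the hypotheses of \hyperref[watsonlemma]{Theorem \ref*{watsonlemma}} and \hyperref[watsoncoro]{Corollary \ref*{watsoncoro}}.

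For the first asymptotic, fix a compact set $K \subseteq \{z : \Re(z) < 0\}$ and set $\lambda = -nz$. Since $K$ is bounded away from the imaginary axis, $\Re(\lambda) > 0$ and $|\arg\lambda| = |\arg(-z)| \le \theta$ for some fixed $\theta < \pi/2$, uniformly on $K$; also $|\lambda| = n|z| \to \infty$ uniformly because $|z|$ is bounded below on $K$. The integral above is then $\int_0^{a+b}\varphi(u-a)e^{-\lambda u}\,du$, and \hyperref[watsonlemma]{Theorem \ref*{watsonlemma}} with $T = a+b$, $\sigma = \mu$, $h = f_1$ yields
\[
	\int_0^{a+b}\varphi(u-a)e^{nzu}\,du = \frac{f_1(0)\Gamma(\mu+1)}{(-nz)^{\mu+1}} + O\!\left((-nz)^{-\mu-2}\right) = f_1(0)\Gamma(\mu+1)(-nz)^{-\mu-1}\bigl(1 + O(1/n)\bigr),
\]
where the last step uses $\bigl|(-nz)^{-1}\bigr| = (n|z|)^{-1} = O(1/n)$ on $K$. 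Multiplying by $e^{-anz}$ gives the stated expansion in $\Re(z) < 0$. For the second asymptotic, fix a compact set in $\{z : \Re(z) > 0\}$; now $e^{nzu}$ grows with $n$, and $\int_0^{a+b}\varphi(u-a)e^{nzu}\,du$ is the integral $\Phi(\lambda)$ of \hyperref[watsoncoro]{Corollary \ref*{watsoncoro}} with $\lambda = nz$, $T = a+b$, $\sigma = \nu$, $h = f_2$ (using the form $\varphi(u-a) = (a+b-u)^{\nu}f_2(a+b-u)$); this gives $\tfrac{f_2(0)\Gamma(\nu+1)}{(nz)^{\nu+1}}e^{(a+b)nz}\bigl(1 + O(1/n)\bigr)$, and multiplying by $e^{-anz}$ cancels part of the exponential and leaves $f_2(0)\Gamma(\nu+1)(nz)^{-\nu-1}e^{bnz}\bigl(1 + O(1/n)\bigr)$.

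The argument is mostly routine bookkeeping, so the one place that deserves care is \emph{uniformity}: one must verify that the error terms produced by Watson's lemma and its corollary are uniform in $z$ over the relevant compact set. This is where the two standing facts above are used --- that those results are uniform over a sector $|\arg\lambda| \le \theta$ with $\theta < \pi/2$, and that a compact subset of an open half-plane stays a positive distance from the imaginary axis (so $|\arg(\pm z)|$ remains below a fixed $\theta$ and $|nz| \to \infty$ uniformly). A minor additional check is that the complex exponents cause no difficulty in absorbing $O\!\left((-nz)^{-\mu-2}\right)$ into $O(1/n)$ times the main term, which holds because the relevant ratio is $(n|z|)^{-1}$ with a real exponent $-1$.
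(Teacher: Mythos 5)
Your proposal is correct and follows essentially the same route as the paper: an affine change of variables to put $F(nz)$ into the one-sided form, followed by Watson's lemma or its corollary at the dominant endpoint, with the uniformity on compact subsets of the open half-planes handled exactly as the paper intends. The only cosmetic difference is that for $\Re(z)<0$ the paper substitutes $t=b-s$ and invokes \hyperref[watsoncoro]{Corollary \ref*{watsoncoro}}, while you keep the single substitution $u=t+a$ and apply \hyperref[watsonlemma]{Theorem \ref*{watsonlemma}} directly; both yield the same expansion.
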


\begin{proof}
This follows from a direct application of \hyperref[watsoncoro]{Corollary \ref*{watsoncoro}}.  To see this, suppose first that $z$ is restricted to a compact subset of $\Re(z) < 0$, and make the substitution $t = b - s$ in the integral for $F(nz)$ to get
\[
	F(nz) = \int_{-a}^{b} \varphi(t) e^{nzt}\,dt = e^{bnz} \int_{0}^{a+b} \varphi(b-s) e^{-nzs}\,ds,
\]
which, after replacing $z$ with $-z$, is of the form required by the corollary.  Next, suppose that $z$ is restricted to a compact subset of $\Re(z) > 0$, and make the substitution $t = s - a$ in the definition of $F(nz)$ to get
\[
	F(nz) = \int_{-a}^{b} \varphi(t) e^{nzt}\,dt = e^{-anz} \int_{0}^{a+b} \varphi(s-a) e^{nzs}\,ds,
\]
which is also of the required form.
\end{proof}

\begin{figure}[t!]
	\centering
	\begin{tabular}{cc}
		\includegraphics[width=0.47\textwidth]{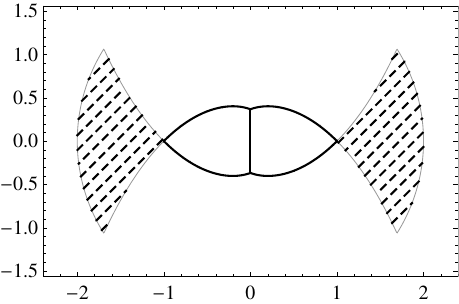}
			& \includegraphics[width=0.47\textwidth]{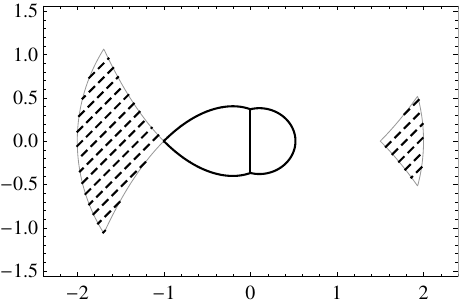}
	\end{tabular}
	\caption{Examples of the regions where additional discrete limit points may lie.  \textbf{Left:} The closed region $E_{1,1}$ (defined in part (iii) of \hyperref[curvetheo2]{Theorem \ref*{curvetheo2}}) is indicated by dashed lines.  The corresponding limit curve $D_{1,1} \cup D_\textnormal{imag}$ is drawn with solid lines.  \textbf{Right:} The closed region $E_{1,2/3}$ is indicated by dashed lines and the curve $D_{1,2/3}\cup D_\textnormal{imag}$ is drawn with solid lines.}
	\label{extrazeros}
\end{figure}

We must also find asymptotics for the integral moments of $\varphi$ and hence for the power series coefficients of $F$.

\begin{lemma}
\label{coeffasymp}
We have
\begin{align*}
	\int_{-a}^{b} \varphi(t) t^n \,dt &= (-1)^n f_1(0) \Gamma(\mu+1) n^{-\mu-1} a^{n+\mu+1} + O\!\left(n^{-\mu-2} a^n\right) \\
			&\qquad + f_2(0) \Gamma(\nu+1) n^{-\nu-1} b^{n+\nu+1} + O\!\left(n^{-\nu-2} b^n\right).
\end{align*}
as $n \to \infty$.
\end{lemma}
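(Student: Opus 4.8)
The plan is to localize the integral near the endpoints $t=-a$ and $t=b$, where $|t^n|$ is largest, and to apply Watson's lemma (\hyperref[watsonlemma]{Theorem~\ref*{watsonlemma}}) to each localized piece after an exponential change of variables that linearizes the power $t^n$. Assume first that $a,b>0$. I would fix $\delta>0$ small enough that the structural hypotheses $(1)$--$(3)$ on $\varphi$ hold on $[-a,-a+\delta]$ and on $[b-\delta,b]$, and split
\[
	\int_{-a}^{b}\varphi(t)t^n\,dt = \int_{-a}^{-a+\delta}\varphi(t)t^n\,dt + \int_{-a+\delta}^{b-\delta}\varphi(t)t^n\,dt + \int_{b-\delta}^{b}\varphi(t)t^n\,dt.
\]
On the middle interval $|t|\le c-\delta$, so that term is bounded by $(c-\delta)^n\int_{-a}^{b}|\varphi(t)|\,dt$; since $c=\max\{a,b\}$, this decays exponentially faster than $c^n$ and is absorbed into whichever of the two claimed error terms $O(n^{-\mu-2}a^n)$, $O(n^{-\nu-2}b^n)$ carries the factor $c^n$. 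If instead $a=0$ (or $b=0$) the corresponding endpoint piece is absent and the corresponding main and error terms in the asserted formula vanish, so that case reduces to the other endpoint.

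For the piece near $t=b$, I would substitute $t=be^{-u}$, which carries $[b-\delta,b]$ onto $[0,U_0]$ with $U_0=\log\frac{b}{b-\delta}$ and gives $t^n=b^ne^{-nu}$ exactly. Writing $g(u)=(1-e^{-u})/u$, an entire function with $g(0)=1$, one has $b-t=b\,u\,g(u)$, hence
\[
	\int_{b-\delta}^{b}\varphi(t)t^n\,dt = b^{n+1}\int_0^{U_0}u^{\nu}h(u)e^{-nu}\,du,\qquad h(u)=b^{\nu}g(u)^{\nu}f_2\bigl(b\,u\,g(u)\bigr)e^{-u}.
\]
Here $\Re(\nu)>-1$ by $(1)$, $h(0)=b^{\nu}f_2(0)\neq0$ by $(2)$, and $h'$ exists and is bounded near $u=0$ because $g(u)^{\nu}$ is analytic there (as $g(0)\neq0$) and $f_2'$ is bounded near $0$ by $(3)$; integrability of $u^{\nu}h(u)$ near $0$ follows from that of $\varphi$. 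Watson's lemma then gives
\[
	\int_0^{U_0}u^{\nu}h(u)e^{-nu}\,du = \frac{b^{\nu}f_2(0)\Gamma(\nu+1)}{n^{\nu+1}} + O\!\left(n^{-\nu-2}\right),
\]
and multiplying through by $b^{n+1}$ produces exactly $f_2(0)\Gamma(\nu+1)n^{-\nu-1}b^{n+\nu+1}+O(n^{-\nu-2}b^n)$. The piece near $t=-a$ is treated in the same way via $t=-ae^{-u}$: now $t^n=(-1)^na^ne^{-nu}$ and $t+a=a\,u\,g(u)$, which yields $(-1)^nf_1(0)\Gamma(\mu+1)n^{-\mu-1}a^{n+\mu+1}+O(n^{-\mu-2}a^n)$. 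Adding the three contributions gives the claimed expansion.

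I do not expect a genuine obstacle here; the one place calling for care is the verification that the transformed integrands meet the hypotheses of Watson's lemma — specifically, checking that the change of variables introduces only the analytic, non-vanishing factor $g(u)^{\nu}$ (respectively $g(u)^{\mu}$), so that the boundedness of $f_2'$ (respectively $f_1'$) near $0$ afforded by hypothesis $(3)$ is preserved — together with the routine bookkeeping of the constant powers of $a$ and $b$ needed to land the error terms in precisely the stated form.
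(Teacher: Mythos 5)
Your proof is correct and takes essentially the same route as the paper: the paper splits the integral at $t=0$ and uses the substitution $s=a(1-e^{-r})$ (equivalently $t=-ae^{-r}$, which is exactly your change of variables) to linearize $t^n$ into $e^{-nr}$ and reduce each endpoint contribution to the form required by Watson's lemma. The only cosmetic difference is that you cut out $\delta$-neighborhoods of the endpoints and discard an exponentially small middle piece, whereas the paper transforms the full half-intervals $[-a,0]$ and $[0,b]$ onto $[0,\infty)$ and applies Watson's lemma with $T=\infty$.
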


\begin{proof}
If $a \neq 0$ we calculate
\begin{align*}
\int_{-a}^{0} \varphi(t) t^n \,dt &= (-a)^n \int_{-a}^{0} \varphi(t) e^{n \log(-t/a)} \,dt \\
			&= (-a)^n \int_0^a \varphi(s-a) e^{n \log(1-s/a)} \,ds \\
			&= (-a)^n \int_0^a s^{\mu} f_1(s) e^{n \log(1-s/a)} \,ds.
\end{align*}
Letting $s = a(1-e^{-r})$ gives
\begin{align*}
\int_{-a}^{0} \varphi(t) t^n \,dt &= (-1)^n a^{n+\mu+1} \int_{0}^{\infty} (1-e^{-r})^{\mu} f_1(a-ae^{-r}) e^{-r} e^{-nr} \,dr \\
			&= (-1)^n a^{n+\mu+1} \int_{0}^{\infty} r^{\mu} \psi_a(r) e^{-nr} \,dr,
\end{align*}
where
\[
	\psi_a(r) = \left(\frac{1-e^{-r}}{r}\right)^{\mu} f_1(a-ae^{-r}) e^{-r}
\]
has a bounded derivative in a neighborhood of $r=0$.  We may now apply Watson's lemma to conclude that
\begin{align*}
\int_{-a}^{0} \varphi(t) t^n \,dt &= (-1)^n \psi_a(0) \Gamma(\mu+1) n^{-\mu-1} a^{n+\mu+1} + O\!\left(n^{-\mu-2} a^n\right) \\
			&= (-1)^n f_1(0) \Gamma(\mu+1) n^{-\mu-1} a^{n+\mu+1} + O\!\left(n^{-\mu-2} a^n\right).
\end{align*}

Using an identical argument we find that
\[
	\int_0^b \varphi(t) t^n \,dt = f_2(0) \Gamma(\nu+1) n^{-\nu-1} b^{n+\nu+1} + O\!\left(n^{-\nu-2} b^n\right),
\]
which completes the proof.
\end{proof}

A similar argument can be used to prove the following.

\begin{lemma}
\label{tailasymp}
\begin{align*}
\int_{-a}^{b} \frac{\varphi(t)}{1-zt} \,t^{n+1} \,dt &= (-1)^{n+1} \frac{f_1(0) \Gamma(\mu+1)}{1+az} \,n^{-\mu-1} a^{n+\mu+2} + O\!\left(n^{-\mu-2} a^n\right) \\
			&\qquad + \frac{f_2(0) \Gamma(\nu+1)}{1-bz} \,n^{-\nu-1} b^{n+\nu+2} + O\!\left(n^{-\nu-2} b^n\right)
\end{align*}
as $n \to \infty$ uniformly when $z$ is restricted to a compact subset of the doubly-slit plane \linebreak $\C \cap (-\infty,-1/a]^c \cap [1/b, +\infty)^c$.
\end{lemma}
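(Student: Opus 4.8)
The plan is to transcribe the proof of Lemma \ref{coeffasymp} almost verbatim, splitting the integral as $\int_{-a}^{b} = \int_{-a}^{0} + \int_{0}^{b}$ and subjecting each piece to the same chain of substitutions; the only new ingredient is that the rational factor $1/(1-zt)$ must be carried along and shown to survive those substitutions as a well-behaved factor that contributes the values $1/(1+az)$ and $1/(1-bz)$ in the leading terms.

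For the left piece (assuming $a \neq 0$; when $a = 0$ the integral and both claimed $a$-terms vanish), I would write $t^{n+1} = (-a)^{n+1} e^{(n+1)\log(-t/a)}$, substitute $t = s-a$ and then $s = a(1-e^{-r})$, exactly as in Lemma \ref{coeffasymp}. The factor $\varphi(t) = s^{\mu} f_1(s)$ produces $(1-e^{-r})^{\mu} f_1(a - ae^{-r})$ as before, while the new denominator transforms as $1 - zt = 1 + az - zs = 1 + aze^{-r}$. Thus the left piece becomes $(-1)^{n+1} a^{n+\mu+2} \int_0^{\infty} r^{\mu} \tilde\psi_a(r)\, e^{-(n+1)r}\,dr$ with $\tilde\psi_a(r) = \left(\frac{1-e^{-r}}{r}\right)^{\mu} \frac{f_1(a - ae^{-r})\, e^{-r}}{1 + aze^{-r}}$, so that $\tilde\psi_a(0) = f_1(0)/(1+az)$. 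Applying Watson's lemma (Theorem \ref{watsonlemma}) with $\lambda = n+1$, and absorbing both $(n+1)^{-\mu-1} = n^{-\mu-1}(1 + O(1/n))$ and the constant $a^{\mu+2}$ into the error, yields the first pair of terms. An identical argument on $\int_0^{b}$, this time substituting $t = b-s$ and $s = b(1-e^{-r})$ so that $1 - zt = 1 - zbe^{-r}$, produces $\tilde\psi_b(r) = \left(\frac{1-e^{-r}}{r}\right)^{\nu} \frac{f_2(b - be^{-r})\, e^{-r}}{1 - zbe^{-r}}$ with $\tilde\psi_b(0) = f_2(0)/(1-bz)$ and hence the second pair of terms. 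Adding the two gives the stated formula.

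The points requiring care are the hypotheses needed to invoke Watson's lemma. First, $\tilde\psi_a$ and $\tilde\psi_b$ must have bounded derivatives near $r = 0$: the factor $\left(\frac{1-e^{-r}}{r}\right)^{\mu}$ is analytic and nonzero there, $f_1(a - ae^{-r})$ has a bounded $r$-derivative near $0$ by hypothesis $(3)$ on $f_1$, and $1/(1 + aze^{-r})$ has a bounded $r$-derivative near $0$ precisely because $1 + az \neq 0$, i.e. because $z$ avoids the slit $(-\infty,-1/a]$ (and likewise $z \notin [1/b,+\infty)$ keeps $1 - bze^{-r} \neq 0$). Second, integrability of $r^{\mu}\tilde\psi_a(r)e^{-r}$ on $[0,\infty)$ follows from $\int_{-a}^{b}|\varphi(t)|\,dt < \infty$ (which, after the change of variables, controls $\int_0^{\infty}|(1-e^{-r})^{\mu} f_1(a-ae^{-r})|\,e^{-r}\,dr$) together with the boundedness of $1/(1+aze^{-r})$.

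Finally, for uniformity in $z$ over a compact subset $K$ of the doubly-slit plane I would appeal to the standard parameter-uniform version of Watson's lemma, which applies once the relevant constants — the suprema of $|\tilde\psi_a|$ and $|\tilde\psi_a'|$ near $r=0$ and the $L^1$ norm above, and their $\tilde\psi_b$ analogues — are bounded uniformly over $z \in K$. The key estimate is $\inf_{z \in K,\ r \ge 0}|1 + aze^{-r}| > 0$: an equality $1 + aze^{-r} = 0$ with $r \ge 0$ forces $z = -e^{r}/a \in (-\infty,-1/a]$, excluded from $K$, and since $1 + aze^{-r} \to 1$ as $r \to \infty$ the infimum over $r$ is attained on a bounded $r$-interval and depends continuously and positively on $z \in K$; the same reasoning applies to $1 - bze^{-r}$ and the slit $[1/b,+\infty)$. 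I expect this uniform bookkeeping — the parameter-uniform Watson estimate together with the uniform lower bounds on the two denominators — to be the only real obstacle, and it is more a matter of care than of difficulty, the rest being a direct transcription of the proof of Lemma \ref{coeffasymp}.
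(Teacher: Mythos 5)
Your proposal is correct and is exactly the argument the paper intends: the paper gives no separate proof of this lemma, saying only that ``a similar argument'' to that of Lemma \ref{coeffasymp} applies, and your write-up is precisely that argument with the factor $1/(1-zt)$ carried through the substitutions to produce $1/(1+aze^{-r})$ and $1/(1-bze^{-r})$, evaluated at $r=0$. The uniformity discussion via the lower bound $\inf_{z\in K,\,r\ge 0}\lvert 1+aze^{-r}\rvert>0$ (and its $b$-analogue), which explains the doubly-slit plane, is the one substantive point the paper leaves implicit, and you handle it correctly.
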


We may now prove the first theorem in this section.

\begin{proof}[Proof of \texorpdfstring{\hyperref[curvetheo1]{Theorem \ref*{curvetheo1}}}{Theorem \ref*{curvetheo1}}]
By definition we have
\[
	F(nz) = \int_{-a}^{b} \varphi(t) e^{nzt} \,dt,
\]
and
\[
	s_n[F](nz) = \int_{-a}^{b} \varphi(t) s_n[\exp](nzt) \,dt.
\]
Subtracting these we get
\begin{align}
\label{Fsndiff}
F(nz) - s_n[F](nz) &= \int_{-a}^{b} \varphi(t) \left(e^{nzt} - s_n[\exp](nzt)\right)\,dt \nonumber \\
					  &= \int_{-a}^{b} \varphi(t) e^{nzt} g_n(zt)\,dt,
\end{align}
where
\[
	g_n(z) = 1 - e^{-nz}s_n[\exp](nz).
\]
It was shown by Szeg\H{o} in \cite{szego:exp} (see also \cite{cvw:expasympi}, \cite{boyergoh:euler}, and \cite{norfolk:1f1}) that
\begin{equation}
\label{szegoapprox}
	g_n(z) = \frac{\left(ze^{1-z}\right)^n}{\sqrt{2 \pi n}} \cdot \frac{z}{1-z} \Bigl(1 - \epsilon_n(z)\Bigr),
\end{equation}
where $\epsilon_n(z) = O(1/n)$ as $n \to \infty$ uniformly when $z$ is restricted to a compact subset of the region $U$ defined in \eqref{uregiondef}.  Upon substituting this into equation \eqref{Fsndiff} we get
\begin{equation}
	F(nz) - s_n[F](nz) = \frac{e^n z^{n+1}}{\sqrt{2 \pi n}} \int_{-a}^{b} \frac{\varphi(t)}{1-zt} \,t^{n+1} \Bigl(1 - \epsilon_n(zt)\Bigr) \,dt.
\label{fatherofmaindifferencezero}
\end{equation}
It follows that zeros of $s_n[F](nz)$ which remain in compact subsets of the region $V_{a,b}$ defined in \eqref{vregiondef} satisfy
\begin{equation}
F(nz) = \frac{e^n z^{n+1}}{\sqrt{2 \pi n}} \int_{-a}^{b} \frac{\varphi(t)}{1-zt} \,t^{n+1} \,dt\, \Bigl(1 + O(1/n)\Bigr)
\label{maindifferencezero}
\end{equation}
as $n \to \infty$, where we have used \hyperref[tailasymp]{Lemma \ref*{tailasymp}} to bring the error term outside of the integral.

Suppose first that $\{z_n\}$ is a sequence in $\C$ such that $s_n[F](nz_n) = 0$ for all $n$, and such that the sequence has a limit point in $V_{a,b} \cap \{z \in \C : \Re(z) < 0\}$.  This implies there is a $\delta > 0$ such that $|z_n + 1/a| > \delta$ for $n$ large enough.

It follows from \hyperref[Fasymp]{Lemma \ref*{Fasymp}} that
\begin{equation}
	|F(nz_n)|^{1/n} = |e^{-az_n}| \left(1 - \left(\Re(\mu) + 1\right) \frac{\log n}{n} + O(1/n)\right)
\label{leftasymp1}
\end{equation}
as $n \to \infty$, and if $c = \max\{a,b\}$ and
\[
	\xi = \begin{cases}
			\Re(\mu) & \text{if } a > b, \\
			\Re(\nu) & \text{if } a < b, \\
			\min\{\Re(\mu),\Re(\nu)\} & \text{if } a = b,
		  \end{cases}
\]
then we have from \hyperref[tailasymp]{Lemma \ref*{tailasymp}} that
\begin{equation}
	\left|\frac{e^n z_n^{n+1}}{\sqrt{2\pi n}} \int_{-a}^{b} \frac{\varphi(t)}{1-z_nt} \,t^{n+1} \,dt\right|^{1/n} = |ecz_n| \left(1 - \left(\xi + \frac{3}{2}\right) \frac{\log n}{n} + O(1/n)\right)
\label{rightasymp}
\end{equation}
as $n \to \infty$.  Upon substituting equations \eqref{leftasymp1} and \eqref{rightasymp} into equation \eqref{maindifferencezero} we see that these zeros $z_n$ satisfy
\[
	\left|c z_n e^{1+az_n}\right| = 1 + \left(\xi - \Re(\mu) + \frac{1}{2}\right) \frac{\log n}{n} + O(1/n)
\]
as $n \to \infty$, which proves part (i) of \hyperref[curvetheo1]{Theorem \ref*{curvetheo1}}.

Suppose now that $\{z_n\}$ is a sequence such that $s_n[F](nz_n) = 0$ for all $n$ and such that the sequence has a limit point in $V_{a,b} \cap \{z \in \C : \Re(z) > 0\}$.  This implies there is a $\delta > 0$ such that $|z_n - 1/b| > \delta$ for $n$ large enough.

Here it follows from \hyperref[Fasymp]{Lemma \ref*{Fasymp}} that
\[
	|F(nz_n)|^{1/n} = |e^{bz_n}| \left(1 - \left(\Re(\nu) + 1\right) \frac{\log n}{n} + O(1/n)\right)
\]
as $n \to \infty$.  Substituting this and equation \eqref{rightasymp} into equation \eqref{maindifferencezero} we see that these zeros $z_n$ satisfy
\[
	\left|c z_n e^{1-bz_n}\right| = 1 + \left(\xi - \Re(\nu) + \frac{1}{2}\right) \frac{\log n}{n} + O(1/n)
\]
as $n \to \infty$, which proves part (ii) of \hyperref[curvetheo1]{Theorem \ref*{curvetheo1}}.
\end{proof}

In the next lemma we will use the result of \hyperref[coeffasymp]{Lemma \ref*{coeffasymp}} to describe where the limit points of the zeros of the normalized sections $s_N[F](Nz)$ may lie.

\begin{lemma}
\label{restrictlemma}
Let $\{N\}$ be a sequence of the natural numbers as described in \hyperref[rosentheo]{Theorem \ref*{rosentheo}}.  If $Z$ is the set of limit points of the zeros of the normalized sections $s_N[F](Nz)$ then $$Z \subseteq \{z \in \C : |z| \leq 2/c\}$$ and $Z$ has no accumulation points in the annulus $$1/c < |z| \leq 2/c.$$
\end{lemma}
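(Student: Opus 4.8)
The statement has two parts: the inclusion $Z\subseteq\{|z|\le 2/c\}$, and the absence of accumulation points in the annulus $1/c<|z|\le 2/c$. For the first part the plan is to invoke Rosenbloom's theorem (\hyperref[rosentheo]{Theorem \ref*{rosentheo}}), after identifying the order of $F$ and the quantity $\rho_n$. For the second part the plan is to combine Rosenbloom's bound on the number of ``far-out'' zeros with a precise asymptotic representation of $s_N[F](Nz)$ in $\{|z|>1/c\}$ extracted from \hyperref[coeffasymp]{Lemma \ref*{coeffasymp}}, and then apply Hurwitz's theorem.

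\textbf{The inclusion.} First I would record that $F$ is entire of order $\rho=1$. Its Taylor coefficients are $a_n=m_n/n!$ with $m_n=\int_{-a}^{b}\varphi(t)t^n\,dt$, and \hyperref[coeffasymp]{Lemma \ref*{coeffasymp}} shows $|m_n|^{1/n}\to c$ (the dominant contribution coming from whichever of $\pm a,b$ has modulus $c$; in the case $a=b$, $\Re(\mu)=\Re(\nu)$ the hypothesis that the quantity in \eqref{Ncond} is bounded away from $0$ prevents cancellation and keeps $|m_N|^{1/N}$ bounded away from $0$ along $\{N\}$). Combined with Stirling's formula $(n!)^{1/n}\sim n/e$ this gives $\rho_n=|a_n|^{-1/n}=(1+o(1))\,n/(ec)$, and in particular $\rho=1$. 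Now \hyperref[rosentheo]{Theorem \ref*{rosentheo}} with $\rho=1$ says that, for every $\epsilon>0$ and $N$ large, every zero $w$ of $s_N[F]$ satisfies $|w|\le(2e+\epsilon)\rho_N$; writing $w=Nz$ and dividing by $N$ gives $|z|\le(2e+\epsilon)\rho_N/N=(2e+\epsilon)/(ec)+o(1)$. Letting $N\to\infty$ and then $\epsilon\to 0$ shows every limit point lies in $\{|z|\le 2/c\}$.

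\textbf{No accumulation points in the annulus.} \hyperref[rosentheo]{Theorem \ref*{rosentheo}} also furnishes, for each $\epsilon>0$, a constant $M_\epsilon$ so that $s_N[F]$ has at most $M_\epsilon$ zeros with $|w|\ge(e+\epsilon)\rho_N$ once $N$ is large; after rescaling, for each $\delta>0$ the section $s_N[F](Nz)$ has at most $M=M(\delta)$ zeros in $\{|z|\ge 1/c+\delta\}$ for $N$ large. This bounds the number of zeros but not yet the number of limit points, so I would pin down where the zeros sit. Writing $s_N[F](Nz)=\sum_{k=0}^{N}\frac{(Nz)^k}{k!}m_k$ and factoring out the last term, one has $\frac{N!\,s_N[F](Nz)}{(Nz)^N m_N}=1+\varepsilon_N(z)$ with $\varepsilon_N(z)=\sum_{j=1}^{N}\frac{m_{N-j}}{m_N}\cdot\frac{N!/(N-j)!}{N^j z^j}$. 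Using \hyperref[coeffasymp]{Lemma \ref*{coeffasymp}} to compare consecutive moments, one gets a uniform bound $\bigl|\frac{m_{N-j}}{m_N}\cdot\frac{N!/(N-j)!}{N^j z^j}\bigr|\le C\gamma^{\,j}$ on each compact $K\subseteq\{|z|>1/c\}$ with $\gamma=1/(c\min_{z\in K}|z|)<1$, which controls the tail, while the fixed-$j$ terms converge; hence $1+\varepsilon_N(z)$ tends (uniformly on $K$, away from the single point of $\{\pm1/c\}$ that is a pole of the limit) to a fixed rational function that is zero-free on $\{z\ne 0\}$ — in the nonborderline cases this is $\frac{cz}{cz\mp1}$. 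Thus $\frac{N!\,s_N[F](Nz)}{(Nz)^N m_N}$ is bounded away from $0$ on $K$ for $N$ large, so $s_N[F](Nz)$ is zero-free there; exhausting $\{1/c<|z|\le 2/c\}$ by such compacts $K$ (they automatically avoid $\pm1/c$ since $|\pm1/c|=1/c$) shows $Z$ has no point, hence no accumulation point, in the open annulus.

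\textbf{Main obstacle.} The delicate case is $a=b$ with $\Re(\mu)=\Re(\nu)$: there the oscillating factor comparing the two endpoint contributions of $m_k$ prevents $1+\varepsilon_N$ from converging, and the limiting rational function, which along $\{N\}$ takes the form $\frac{\alpha_N z}{cz-1}+\frac{\beta_N z}{cz+1}$ with $\alpha_N,\beta_N$ bounded, may carry a single zero that moves with $N$. Here the choice of $\{N\}$ so that \eqref{Ncond} — equivalently $\alpha_N+\beta_N$ — stays bounded away from $0$ keeps $m_N$ of the expected size $\asymp c^N N^{-\Re(\mu)-1}$ and keeps the limiting functions from degenerating, so that $s_N[F](Nz)$ still has only boundedly many zeros on each compact $K$ in the annulus; one must then argue (using the Rosenbloom count $M(\delta)$ and the explicit form of these moving zeros) that such zeros cannot accumulate on a continuum inside $1/c<|z|\le 2/c$. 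Making this last step airtight is the main work; everything else — the order computation, Stirling, the geometric-series estimate, and the Hurwitz argument — is routine.
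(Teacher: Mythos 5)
Your first half (the inclusion $Z\subseteq\{|z|\le 2/c\}$) is exactly the paper's argument: Stirling plus \hyperref[coeffasymp]{Lemma \ref*{coeffasymp}} give $\rho_N\sim N/(ec)$ and $\rho=1$, and Rosenbloom's disk $|w|\le(2e^{1/\rho}+\epsilon)\rho_N$ rescales to $|z|\le 2/c$. That part is fine.

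For the second half you diverge from the paper, and the divergence is where the gap sits. The paper never tries to locate the far-out zeros: it only uses the other half of \hyperref[rosentheo]{Theorem \ref*{rosentheo}}, namely that the number of zeros of $s_N[F](Nz)$ with $|z|>(1+\epsilon)/c$ is bounded, and deduces the absence of accumulation points from that count. You instead aim for the much stronger claim that $s_N[F](Nz)$ is eventually zero-free on every compact of $\{|z|>1/c\}$. In the non-degenerate cases ($a\ne b$, or $a=b$ with $\Re(\mu)\ne\Re(\nu)$) your computation does produce the zero-free limit $cz/(cz\mp 1)$ and the conclusion follows, modulo a small repair: the tail bound $C\gamma^{\,j}$ cannot hold with $C$ independent of $N$, since $|m_N|\asymp c^N N^{-\xi-1}$ while $|m_{N-j}|\le \|\varphi\|_1\, c^{N-j}$, so the crude ratio bound carries an extra factor $N^{\xi+1}$; one must split the sum at $j\asymp\log N$. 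But in the balanced case $a=b$, $\Re(\mu)=\Re(\nu)$ --- the only case where the lemma's second assertion is not already subsumed by your stronger claim --- you explicitly stop short. There the limiting function $\frac{\alpha_N z}{cz+1}+\frac{\beta_N z}{cz-1}$ vanishes at $cz=(\alpha_N-\beta_N)/(\alpha_N+\beta_N)$, which has modulus greater than $1$ whenever $\Re\bigl(\alpha_N\overline{\beta_N}\bigr)<0$; since $\beta_N/\alpha_N$ carries the rotating factor $(-1)^N N^{\mu-\nu}$, this zero genuinely wanders, a priori densely along an arc inside the annulus, and by Rouch\'e each position is shadowed by an actual zero of the section. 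Your argument as written therefore does not exclude $Z$ containing a continuum there. Nor does appealing to the Rosenbloom count $M(\delta)$ close this: it bounds the number of far-out zeros of each individual section, not the size of the set of their limit points (a single wandering zero per section can already generate an arc of limit points). So the ``last step'' you defer is not a technicality --- it is the entire content of the lemma in the balanced case, and it is precisely the point the paper handles by its counting argument rather than by localizing zeros. To finish along your route you would need either to fall back on the paper's counting deduction, or to impose enough additional control on the subsequence $\{N\}$ (beyond \eqref{Ncond} being bounded away from zero, e.g.\ convergence of $N^{\mu-\nu}$) to pin the wandering zero to a single limit.
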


\begin{proof}
From Stirling's formula
\[
	k! \sim \left(\frac{k}{e}\right)^k \sqrt{2\pi k}
\]
we have
\[
	(k!)^{1/k} \sim \frac{k}{e}
\]
as $k \to \infty$, and with the aid of \hyperref[coeffasymp]{Lemma \ref*{coeffasymp}} we calculate
\[
	\left|\int_{-a}^{b} \varphi(t) t^k \,dt\right|^{-1/k} \longrightarrow \frac{1}{c}
\]
as $k \to \infty$.  Combining these we see that the power series coefficients of $F(z)$, which are given by
\[
	a_k = \frac{1}{n!} \int_{-a}^{b} \varphi(t) t^k \,dt,
\]
satisfy
\[
	\rho_k = |a_k|^{-1/k} \sim \frac{k}{ec},
\]
where $\rho_k$ is as defined in \hyperref[rosentheo]{Theorem \ref*{rosentheo}}.  Thus the order $\rho$ of $F$ is calculated to be
\[
	\rho = \limsup_{k \to \infty} \frac{\log k}{\log \rho_k} = 1
\]
(see, e.g., \cite[p.~9]{boas:entirefunctions}).

By \hyperref[rosentheo]{Theorem \ref*{rosentheo}}, all zeros of the section $s_N[f](z)$ lie in
\[
	|z| \leq (2+\epsilon)\frac{N}{c}
\]
for $N$ large enough, which tells us that all points of $Z$ must lie in $|z| \leq 2/c$.  The theorem also tells us that the number of zeros of $s_N[F](z)$ with
\[
	|z| > (1+\epsilon)\frac{N}{c}
\]
is bounded for every $\epsilon > 0$.  If $Z$ were to have an accumulation point in the annulus $1/c < |z| \leq 2/c$ then $s_N[f](Nz)$ would necessarily have an unbounded number of zeros with $|z| > (1+\epsilon)/c$ for some $\epsilon > 0$, contradicting the last statement.
\end{proof}

We will now use this result to prove the second theorem in this section.

\begin{proof}[Proof of \texorpdfstring{\hyperref[curvetheo2]{Theorem \ref*{curvetheo2}}}{Theorem \ref*{curvetheo2}}]
Let $Z$ be the set of limit points of the zeros of the normalized sections $s_N[F](Nz)$.  It follows from \hyperref[curvetheo1]{Theorem \ref*{curvetheo1}} that all points of $Z$ in the region $V_{a,b}$ with nonzero real part must lie on the curve $D_{a,b}$ as defined in part (i) of \hyperref[curvetheo2]{Theorem \ref*{curvetheo2}}.  Consequently
\[
	\Bigl(Z \cap V_{a,b}\Bigr) \cup \{\pm 1/c\} \subseteq D_{a,b} \cup \{z \in \C : \Re(z) = 0\} \cup \{\pm 1/c\},
\]
so that
\begin{align}
Z \setminus \Bigl(D_{a,b} \cup \{z \in \C : \Re(z) = 0\} \cup \{\pm 1/c\}\Bigr) &\subseteq Z \setminus \Bigl(V_{a,b} \cup \{\pm 1/c\}\Bigr) \label{zsubset} \\
&\subseteq Z \setminus \{z \in \C : |z| \leq 1/c\} \nonumber \\
&\subseteq \{z \in \C : |z| > 1/c\}. \nonumber
\end{align}
In light of \hyperref[restrictlemma]{Lemma \ref*{restrictlemma}} this shows that the points of $Z$ with nonzero real part not in the set $D_{a,b} \cup \{\pm 1/c\}$ are isolated.

\hyperref[rosentheo]{Theorem \ref*{rosentheo}} assures us that the sequence of sections $\{s_N[F](z)\}$ has a positive fraction of zeros in any sector with vertex at the origin.  It is straightforward to show that for any $0 \leq \theta < 2\pi$ there is a unique $r > 0$ such that $re^{i\theta} \in D_{a,b}$, and since every other point of $Z \setminus \{\pm 1/c\}$ with nonzero real part is isolated it must be true that every point of the curve $D_{a,b}$ is a limit point of zeros.  This proves part (i) of \hyperref[curvetheo2]{Theorem \ref*{curvetheo2}}.

If $\{z_N\}$ is a sequence of complex numbers such that $s_N[F](N z_N) = 0$ for all $N$ which has a limit point on $\{z \in \C : \Re(z) = 0 \,\,\,\text{and}\,\,\, \Im(z) > 1/(ec)\}$ then by equation \eqref{maindifferencezero} and \hyperref[tailasymp]{Lemma \ref*{tailasymp}} we must have $F(Nz_N) \to \infty$.  But if $y \in \R$ then
\[
	|F(iNy)| = \left|\int_{-a}^{b} \varphi(t) e^{iNyt} \,dt\right| \leq \int_{-a}^{b} |\varphi(t)| \,dt,
\]
so such a sequence of zeros cannot exist.  Hence any limit points on the imaginary axis must satisfy $\Im(z) \leq 1/(ec)$.

Further, if $\{z_N\}$ is a sequence of zeros which has a limit point in $|z| \leq 1/(ec)$, then by equation \eqref{fatherofmaindifferencezero} and \hyperref[tailasymp]{Lemma \ref*{tailasymp}} we must have $F(N z_N) \to 0$.  In other words, the zeros $\{z_N\}$ must approximate the zeros of $F(Nz)$.  Conversely, if $\{w_N\}$ is a sequence such that $F(N w_N) = 0$ and $w_n \to iy$ with $|y| \leq 1/(ec)$ then we must likewise have $s_N[F](N w_N) \to 0$, so that the zeros of $F(Nz)$ must approximate the zeros of $s_N[F](Nz)$.  We observe from the asymptotic expansion for $F(nz)$ in \hyperref[Fasymp]{Lemma \ref*{Fasymp}} that the limit points of the zeros of $F(nz)$ all lie on the imaginary axis, and so arrive at the conclusion in part (ii) of \hyperref[curvetheo2]{Theorem \ref*{curvetheo2}}.

The truth of part (iii) follows from combining the facts that a) all points of $Z$ with nonzero real part not in $D_{a,b} \cup \{\pm 1/c\}$ are isolated and lie in $\C \setminus V_{a,b}$, as shown in \eqref{zsubset}; b) the points of $Z$ on the imaginary axis lie in $D_\textnormal{imag}$; and c) all points of $Z$ lie in the disk $|z| \leq 2/c$, as shown in \hyperref[restrictlemma]{Lemma \ref*{restrictlemma}}.  This completes the proof of \hyperref[curvetheo2]{Theorem \ref*{curvetheo2}}.
\end{proof}

\ifpdf
    \graphicspath{{sec_specialcases/PNG/}{sec_specialcases/PDF/}}
\else
    \graphicspath{sec_specialcases/EPS/}
\fi

\section{Special cases of the exponential integrals}
\label{sec_specialcases}

We mentioned in the introduction that the confluent hypergeometric functions with $b > 1$ studied by Norfolk \cite{norfolk:1f1} are a special case of the exponential integrals studied in this paper.  Indeed, when $\Re(b) > 1$ we have the integral representation
\[
	{}_1F_1(1;b;z) = (b-1) \int_0^1 (1-t)^{b-2} e^{zt} \,dt.
\]
Our result extends some of Norfolk's results to the case of complex $b$ with $\Re(b) > 1$.

The prolate spheroidal wave functions are also special cases of these exponential integrals, a consequence of the fact that they satisfy the integral equation
\[
	2i^n R_{0n}^{(1)}(c,1) S_{0n}(c,z) = \int_{-1}^{1} S_{0n}(c,t) e^{iczt}\,dt
\]
(see \cite[sec.~V]{prolatespheroidal}).

In this section we will focus specifically on the class of Bessel functions of the first kind, defined by
\[
	J_{\alpha}(z) = \left(\frac{z}{2}\right)^{\alpha} \sum_{k=0}^{\infty} \frac{(-1)^k}{\Gamma(k+1)\Gamma(k+\alpha+1)} \left(\frac{z}{2}\right)^{2k}.
\]
For $\Re(\alpha) > -1/2$ we have Poisson's integral representation (see, e.g., \cite{watson:bessel})
\[
	J_{\alpha}(z) = \frac{\left(\frac{z}{2}\right)^{\alpha}}{\Gamma\!\left(\alpha + \frac{1}{2}\right) \Gamma\!\left(\frac{1}{2}\right)} \int_{-1}^{1} \left(1-t^2\right)^{\alpha - 1/2} e^{izt} \,dt,
\]
which, once the factor of $z^\alpha$ has been removed and $z$ has been replaced with $-iz$, shows that these Bessel functions are also examples of the exponential integrals.  In the notation of \hyperref[sec_prelims]{Section \ref*{sec_prelims}} we have $a=b=1$, $\mu = \nu = \alpha - 1/2$, and
$$
f_1(t) = f_2(t) = (2-t)^{\alpha-1/2}.
$$
As far as we can tell, asymptotics for the zeros of sections of the Bessel functions have not been previously studied.  We state the result formally as a series of corollaries to \hyperref[curvetheo1]{Theorem \ref*{curvetheo1}} and \hyperref[curvetheo2]{Theorem \ref*{curvetheo2}}.  These results are illustrated in \hyperref[besselzeros]{Figure \ref*{besselzeros}}.

\begin{corollary}
Let $J_\alpha$ be the Bessel function of order $\alpha$ with $\Re(\alpha) > -1/2$, and for $n$ even define
\[
\label{besselsections}
	s_n[J_{\alpha}](z) = \frac{1}{2^\alpha} \sum_{k=0}^{n/2} \frac{(-1)^k}{\Gamma(k+1)\Gamma(k+\alpha+1)} \left(\frac{z}{2}\right)^{2k}
\]
to be its $n^{\text{th}}$ section.  Zeros of $s_n[J_{\alpha}](nz)$ which converge to a point in the region $iV_{1,1} \cap \{z \in \C : \Im(z) > 0\}$ satisfy
\[
	\left|z e^{1+iz}\right| = 1 + \frac{\log n}{2n} + O(1/n)
\]
as $n \to \infty$, and zeros which converge to a point in the region $iV_{1,1} \cap \{z \in \C : \Im(z) < 0\}$ satisfy
\[
	\left|z e^{1-iz}\right| = 1 + \frac{\log n}{2n} + O(1/n)
\]
as $n \to \infty$.
\end{corollary}

\begin{proof}
This is a direct application of \hyperref[curvetheo1]{Theorem \ref*{curvetheo1}}.  We should note that the quantity in \eqref{ncond} is just $(-1)^n + 1$ in this case, and since $n$ is even this is indeed bounded away from zero.
\end{proof}

\begin{corollary}
Let $\{N\}$ be a subsequence of the even indices $\{n\}$ as described in \hyperref[rosentheo]{Theorem \ref*{rosentheo}}.  Every point of the set
\begin{align*}
	D(J) &= \left\{z \in \C : \Im(z) \geq 0,\,\,\, |z| \leq 1, \,\,\,\text{and}\,\,\, \left|ze^{1+iz}\right| = 1 \right\} \\
			&\qquad \cup \left\{z \in \C : \Im(z) \leq 0,\,\,\, |z| \leq 1, \,\,\,\text{and}\,\,\, \left|ze^{1-iz}\right| = 1 \right\} \\
			&\qquad \cup \left\{x \in \R : -1/e \leq x \leq 1/e \right\}.
\end{align*}
is a limit point of the zeros of the normalized sections $s_N[J_{\alpha}](Nz)$.  Every limit point which is not on $D(J)$ is isolated and must lie in the set
\[
	\{z \in \C : |z| \leq 2\} \setminus \Bigl(iV_{1,1} \cup \{\pm i\}\Bigr),
\]
where $V_{1,1}$ is as defined in \eqref{vregiondef}.
\label{besselcorollary}
\end{corollary}

\begin{figure}[h!tb]
	\centering
	\includegraphics[width=0.45\textwidth]{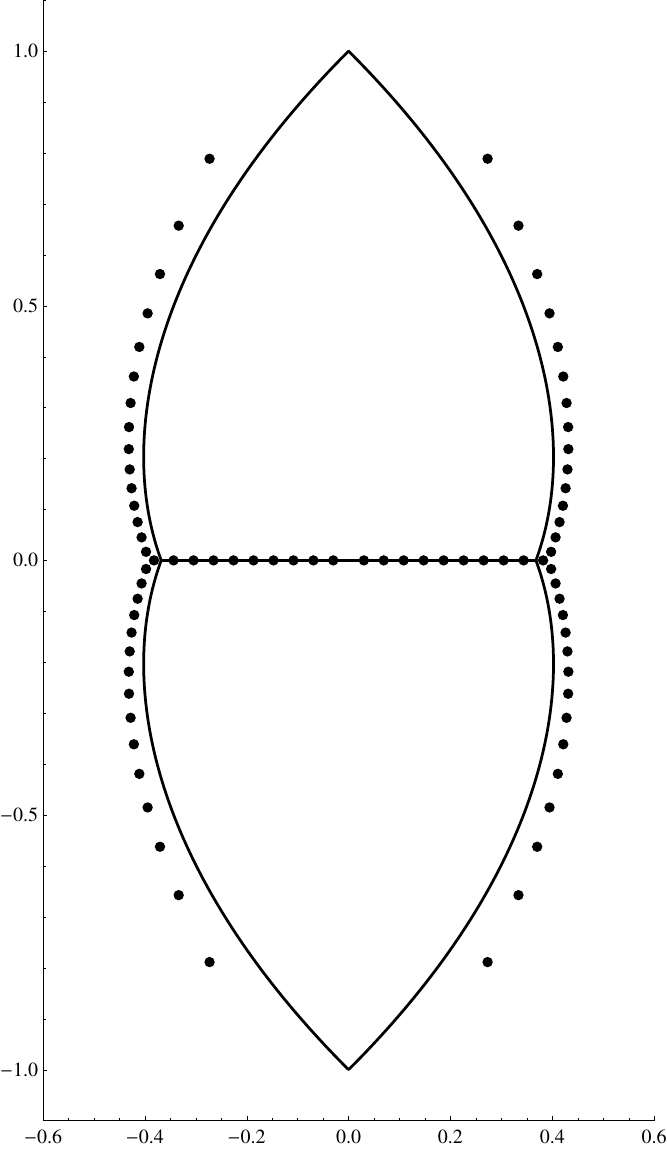}
	\caption{LEFT: Zeros of the normalized section $s_{80}[J_0](80z)$ and the limit curve $D(J)$ as defined in \hyperref[besselcorollary]{Corollary \ref*{besselcorollary}}.}
\label{besselzeros}
\end{figure}

\begin{proof}
The only statement in \hyperref[besselcorollary]{Corollary \ref*{besselcorollary}} which does not follow immediately from \hyperref[curvetheo2]{Theorem \ref*{curvetheo2}} is the assertion that every point of the line segment
$$
S = \left\{x \in \R : -1/e \leq x \leq 1/e \right\}
$$
is a limit point of the zeros of the normalized sections $s_N[J_\alpha](Nz)$.

Near the end of the proof of \hyperref[curvetheo2]{Theorem \ref*{curvetheo2}} we showed that the zeros of $s_N[J_\alpha](Nz)$ approximate the zeros of $J_\alpha(Nz)$ and vice versa in the set $|z| \leq 1/e$.  The zeros of $J_\alpha(z)$ are located at the points $z=k\pi + O(1)$, $k \in \Z$ (see \cite{watson:bessel}), so for any $x \in [-1/e,1/e]$ we can find a sequence $\{z_N\}$ such that $J_\alpha(Nz_N) = 0$ and $z_N \to x$.  It therefore follows that we can find a sequence $\{z_N'\}$ such that $s_N[J_\alpha](Nz_N') = 0$ and $z_N' \to x$, so that every point of $S$ is a limit point of the zeros of the normalized sections.
\end{proof}

We can obtain a slightly different version of the above corollary if we require $\alpha$ to be real.

\begin{corollary}
Let $J_\alpha$ be the Bessel function of real order $\alpha$ with $\alpha > -1/2$, and let $D(J)$ and $s_n[J_\alpha](z)$, where $n$ is a positive, even integer, be defined as in \hyperref[besselcorollary]{Corollary \ref*{besselcorollary}}.  Then every limit point of the zeros of the normalized sections $s_n[J_{\alpha}](nz)$ lies on $D(J)$.
\label{besselcorollary2}
\end{corollary}

\begin{proof}
Let us begin by writing
\[
	s_n[J_{\alpha}](-i n z) = \frac{1}{2^\alpha} P_n\!\left(z^2\right),
\]
where
\[
	P_n(z) = \sum_{k=0}^{n/2} \frac{n^{2k}}{4^k \Gamma(k+1)\Gamma(k+\alpha+1)} \,z^k.
\]
By the Enestr\"om-Kakeya theorem (see, e.g., \cite[ch.~7]{marden:geom} and \cite{asv:ke}), all zeros of $P_n(z)$ satisfy
\[
	|z| \leq 1 + \frac{2\alpha}{n},
\]
so that the limit points of the zeros of the polynomials $P_n$, and hence of the sections $s_n[J_{\alpha}](nz)$, lie in the closed unit disk.

This argument replaces the use of \hyperref[rosentheo]{Theorem \ref*{rosentheo}} and hence \hyperref[restrictlemma]{Lemma \ref*{restrictlemma}} in the proof of \hyperref[curvetheo2]{Theorem \ref*{curvetheo2}}.  Indeed, \hyperref[rosentheo]{Theorem \ref*{rosentheo}} is the origin of the restriction of the indices to the subsequences $\{N\}$.

In particular, it was shown in the proofs of \hyperref[curvetheo1]{Theorem \ref*{curvetheo1}} and \hyperref[curvetheo2]{Theorem \ref*{curvetheo2}} that all limit points in the region $V_{a,b}$ must lie on $D_{a,b} \cup D_\text{imag}$, and this curve is precisely $iD(J)$ in the context of the Bessel functions.  We just showed that all limit points lie in the disk $|z| = 1$, which is a subset of $V_{1,1} \cup \{\pm 1\}$.   It follows that the curve $D(J)$ contains all of the limit points of the zeros of the normalized sections $s_n[J_\alpha](nz)$.
\end{proof}

Note that in comparison with \hyperref[besselcorollary]{Corollary \ref*{besselcorollary}} we no longer have the conclusion that every point of $D(J)$ is a limit point of the zeros of the normalized sections.  This was essentially a consequence of the portion of \hyperref[rosentheo]{Theorem \ref*{rosentheo}} which guaranteed that the sections had a positive fraction of zeros in any sector with vertex at the origin.  In exchange, however, we gain the conclusion that every limit point of the zeros lies on $D(J)$.

Lastly we would like to point out that in defining the sections of the Bessel functions in equation \eqref{besselsections} we have removed their factor of $z^\alpha$.  As such this is a slight abuse of notation; the polynomials in \eqref{besselsections} are technically the sections of the entire functions $z^{-\alpha} J_\alpha(z)$.

\ifpdf
    \graphicspath{{sec_prelims/PNG/}{sec_prelims/PDF/}}
\else
    \graphicspath{sec_prelims/EPS/}
\fi

\section{Discussion}
\label{sec_conclusion}

The exponential integrals in this paper were studied in part because of their simple definitions.  It should be noted that their integral form facilitated calculation and allowed for the use of standard asymptotic techniques.  But they were also studied because their definition was flexible enough to allow the zeros of their sections to display some unique behavior, perhaps the most notable of which is the tendency for the zeros to approach the limit curve from the interior or the exterior depending on the orders of the singularities/zeros of the integrand $\varphi$ at the endpoints of integration (see \hyperref[curvetheoremark]{Remark \ref*{curvetheoremark}}).  

In \hyperref[curvetheo2]{Theorem \ref*{curvetheo2}} we allowed for the possibility that countably many limit points of the zeros do not lie on the limit curve $D_{a,b} \cup D_\text{imag}$.  However, we have been unable to find any examples of these exponential integrals which exhibit pathological behavior; in all of the cases studied numerically the only limit points appear to be the points of $D_{a,b} \cup D_\text{imag}$.  It is possible that there are never any limit points other than those on $D_{a,b} \cup D_\text{imag}$, though we have been unable to prove it.  We state this as a conjecture.

\begin{conjecture}
If $F$ is an exponential integral function as in \hyperref[sec_prelims]{Section \ref*{sec_prelims}} then all of the limit points of the zeros of the normalized sections $s_n[F](nz)$ lie on $D_{a,b} \cup D_\text{imag}$.
\end{conjecture}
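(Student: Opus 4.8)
By \hyperref[curvetheo2]{Theorem \ref*{curvetheo2}} the conjecture reduces to two separate claims: (a) that the conclusions of \hyperref[curvetheo1]{Theorem \ref*{curvetheo1}} and \hyperref[curvetheo2]{Theorem \ref*{curvetheo2}} persist when the subsequence $\{N\}$ is replaced by the full sequence $\{n\}$, and (b) that no zeros of the normalized sections accumulate in $E_{a,b}\setminus\bigl(D_{a,b}\cup D_{\textnormal{imag}}\bigr)$. Since $E_{a,b}=\{|z|\le 2/c\}\setminus V_{a,b}$ and $\{z:|z|<1/c\}\subseteq V_{a,b}$, the set in (b) lies inside the two ``Szeg\H{o} tongues'' $T_a=\{z:-az\notin U\}\cap\{|z|\le 2/c\}$ and $T_b=\{z:bz\notin U\}\cap\{|z|\le 2/c\}$, bounded lens-shaped regions emanating from $z=-1/a$ and $z=1/b$. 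The plan is to prove (a) by sharpening the coefficient asymptotics and (b) by obtaining sufficiently uniform asymptotics for $s_n[F](nz)$ on $T_a$ and $T_b$.

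For (a), the subsequence enters only through \hyperref[rosentheo]{Theorem \ref*{rosentheo}}, to guarantee that $\rho_n=|a_n|^{-1/n}\sim n/(ec)$ regularly enough to take $n$-th roots in \hyperref[Fasymp]{Lemma \ref*{Fasymp}}, \hyperref[coeffasymp]{Lemma \ref*{coeffasymp}} and \hyperref[tailasymp]{Lemma \ref*{tailasymp}} and to apply the positive-fraction statement. Outside the degenerate regime $a=b$, $\mu=\nu$ the leading term of \hyperref[coeffasymp]{Lemma \ref*{coeffasymp}} already dominates for every $n$ with no parity oscillation, so Norfolk's constructive version \cite{norfolk:widthconj} of \hyperref[rosentheo]{Theorem \ref*{rosentheo}} applies with $\{N\}=\{n\}$ and the condition \eqref{Ncond} is automatic; in the degenerate regime the two leading terms of \hyperref[tailasymp]{Lemma \ref*{tailasymp}} collapse only on a finite set of exceptional $z$ (where $f_1(0)(1-az)^{-1}=\pm f_2(0)(1+az)^{-1}$), so the argument goes through for all $n$ away from that set, and one treats the remaining points by carrying \hyperref[coeffasymp]{Lemma \ref*{coeffasymp}} and \hyperref[tailasymp]{Lemma \ref*{tailasymp}} one term further. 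This last step seems to require a mild strengthening of hypothesis~(3) of \hyperref[sec_prelims]{Section \ref*{sec_prelims}} (a second bounded derivative of $f_1,f_2$), so the conjecture may in the end be provable only under slightly stronger regularity than is currently assumed.

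For (b), fix the tongue $T_a$; $T_b$ is symmetric with $\mu,f_1,a$ replaced by $\nu,f_2,b$. On the bulk of $T_a$, where $|z+1/a|$ stays bounded below, the left endpoint value $-az$ lies in the interior of $\C\setminus U$, where $g_n(-az)\to 1$ rather than $0$ --- the partial sum $s_n[\exp](nw)$ omits all terms of index exceeding $n$, which for $\Re w>1$ includes those of largest modulus, so $e^{-nw}s_n[\exp](nw)\to0$. Splitting the integral \eqref{Fsndiff} at the $t$ where $zt$ crosses $\partial U$, the part near $t=-a$ has $e^{nzt}g_n(zt)\approx e^{nzt}$ and reconstitutes $F(nz)$ via \hyperref[Fasymp]{Lemma \ref*{Fasymp}}, while the rest is estimated with \eqref{szegoapprox} and \hyperref[tailasymp]{Lemma \ref*{tailasymp}}; one gets $F(nz)-s_n[F](nz)=F(nz)\bigl(1+o(1)\bigr)+\tfrac{e^nz^{n+1}}{\sqrt{2\pi n}}\int_{-a}^{b}\tfrac{\varphi(t)}{1-zt}\,t^{n+1}\,dt$, the $F(nz)$ terms cancel in $s_n[F](nz)=F(nz)-\bigl(F(nz)-s_n[F](nz)\bigr)$, and $s_n[F](nz)$ is to leading order a non-vanishing multiple of $z^{n+1}$, hence has no zeros in the bulk. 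Near the tip, where $|z+1/a|\lesssim n^{-1/2}$, one needs uniform error-function-type asymptotics for $s_n[F](nz)$ handling simultaneously the turning-point transition of $s_n[\exp]$ near argument $n$ and the power behaviour $\varphi(t)\sim(t+a)^{\mu}f_1(0)$ at the endpoint: after rescaling $z=-1/a+\zeta/(a\sqrt n)$, $t=-a+u/n$ and invoking the asymptotics of Carpenter, Varga and Waldvogel \cite{cvw:expasympi}, a suitable normalization of $s_n[F](nz)$ should converge locally uniformly to $c_\mu\,e^{-anz}\erfc\!\bigl(\zeta/\sqrt2\bigr)$ with $c_\mu\neq0$, so that all zeros in the tip lie within $O(n^{-1/2})$ of $z=-1/a$ and approach it from inside $T_a$ (the zeros of $\erfc$ being in the left half-plane). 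If $c=a$ then $-1/a\in D_{a,b}$ and there is nothing more to do; if $c=b>a$ then $-1/a\notin D_{a,b}$, but in that case the Szeg\H{o}-tail term, of modulus-rate $e|z|c$, strictly dominates the error-function term, of rate $e$, at $z=-1/a$, so once more $s_n[F](nz)$ is a non-vanishing multiple of $z^{n+1}$ near $-1/a$ and $-1/a$ is not a limit point.

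The main obstacle is the tip analysis in (b): establishing the stated uniform asymptotics for $s_n[F](nz)$ within $O(n^{-1/2})$ of $-1/a$ (or $1/b$) is a genuine two-scale matching problem --- the error-function transition of $s_n[\exp]$ against the endpoint singularity of $\varphi$ --- that must be carried out separately in each configuration of $a$ versus $b$ and of $\Re\mu$ versus $\Re\nu$. A secondary obstacle is the degenerate-index case in (a), which as noted may force strengthening the hypotheses on $\varphi$. Neither seems to need an essentially new idea, but both require substantial technical work beyond what is developed here, which is presumably why the statement is left as a conjecture.
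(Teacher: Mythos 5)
The statement you are trying to prove is posed in the paper as a \emph{conjecture}: the author explicitly says ``we have been unable to prove it,'' and no proof is given. So there is no argument of the paper's to compare yours against; the only question is whether your program closes the gap, and it does not --- as you yourself acknowledge. Still, it is worth pinning down where the genuine obstructions lie, because a couple of steps you treat as routine would in fact fail as written.

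The heart of the matter is step (b), and both halves of it have real gaps. For the tip, the assertion that a normalization of $s_n[F](nz)$ ``should converge'' to $c_\mu e^{-anz}\erfc(\zeta/\sqrt{2})$ is a conjecture inside a conjecture; for $\mu \neq 0$ the endpoint factor $(t+a)^{\mu}$ interacts with the turning point of $s_n[\exp]$ and the transition function need not be $\erfc$ at all (one expects an incomplete-gamma--type object whose zero set must then be located). Moreover your dichotomy ``$|z+1/a|$ bounded below'' versus ``$|z+1/a|\lesssim n^{-1/2}$'' leaves the intermediate scales $n^{-1/2}\ll |z+1/a|\ll 1$ uncovered, and that is precisely where \eqref{szegoapprox} degenerates (its error is $O(1/n)$ only on compact subsets of $U$) --- this is the regime where Carpenter--Varga--Waldvogel needed genuinely uniform asymptotics even for $e^z$ alone. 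The bulk is also not as clean as you claim: writing $s_n[F](nz)=\int\varphi(t)s_n[\exp](nzt)\,dt$ and using $s_n[\exp](nw)\approx \frac{(we^{1-w})^n e^{nw}}{\sqrt{2\pi n}}\cdot\frac{w}{w-1}$ on the part of $[-a,b]$ where $zt\notin U$, the two tail pieces recombine into the full Szeg\H{o}-tail integral and what survives is $\int_{\{t:\,zt\in U\}}\varphi(t)e^{nzt}\,dt$ minus that tail; showing the tail dominates requires comparing $(ec|z|)^n n^{-\xi-3/2}$ against $e^{n\Re(z)t_0(z)}$ where $t_0(z)$ is the crossing point, and this comparison is position-dependent and unaddressed. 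Worse, on the real segment of the tongue ($z\in(-2/c,-1/a]$) the point $1/z$ lies in $[-a,0)$, so the factor $(1-zt)^{-1}$ in \hyperref[tailasymp]{Lemma \ref*{tailasymp}} has a pole \emph{inside} the range of integration and that lemma simply does not apply there; a separate (principal-value/residue) analysis is needed on exactly the part of $E_{a,b}$ most likely to harbour stray limit points.

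Step (a) is also not mere bookkeeping. Rosenbloom's theorem is used for two things: the positive-fraction property (only needed for the ``every point of $D_{a,b}$ is a limit point'' direction, which the conjecture does not require) and, crucially, the bound confining all zeros of $s_N[F](z)$ to $|z|\le(2+\epsilon)N/c$ together with the boundedness of the number of zeros outside $(1+\epsilon)N/c$. Knowing $|a_n|^{-1/n}\sim n/(ec)$ for all $n$ does not by itself yield these full-sequence statements; the author's own Bessel corollary has to replace Rosenbloom by an Enestr\"om--Kakeya argument special to that case, and no general substitute is offered here. Your observation that the nondegeneracy condition \eqref{Ncond} is automatic when $a\neq b$ or $\Re(\mu)\neq\Re(\nu)$ is correct and worth making, but in the degenerate case your fix changes the hypotheses of the theorem, so it would prove a different statement. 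In short: the plan is the natural one and correctly identifies the two outstanding issues, but the uniform two-scale asymptotics near $\mp 1/a$, $\pm 1/b$, the tail asymptotics across the cut $(-\infty,-1/a]\cup[1/b,\infty)$, and a full-sequence zero-confinement bound are all missing ideas, not just missing details.
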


It is also possible that, for an exponential integral function $F$, the sequence of sections $\{s_n[F](z)\}$ has a positive fraction of zeros in any sector with vertex at the origin if we allow the indices to run through all of the natural numbers $\N$.  This would allow us to take $\{N\} = \N$ in the statement of \hyperref[curvetheo2]{Theorem \ref*{curvetheo2}}.

\begin{conjecture}
If $F$ is an exponential integral function and $n \in \N$ then the sequence of sections $\{s_n[F](z)\}$ has a positive fraction of zeros in any sector with vertex at the origin.  Consequently, every point of $D_{a,b} \cup D_\text{imag}$ is a limit point of the zeros of the normalized sections $s_n[F](nz)$.
\end{conjecture}

We refer the reader to \cite{acv:angulardistribution}, where the authors study the related problem of the angular distribution of the zeros of the partial sums of the exponential function.


\par \vspace{\baselineskip}

\noindent \small \textbf{Acknowledgments:} The author would like to thank the referees for their valuable questions and comments. \normalsize

\bibliographystyle{amsplain}
\bibliography{biblio}

\end{document}